\numberwithin{equation}{section}
\numberwithin{equation}{subsection}
\theoremstyle{plain}
\newtheorem{theorem}[equation]{Theorem}
\newtheorem{lemma}[equation]{Lemma}
\newtheorem{proposition}[equation]{Proposition}
\newtheorem{corollary}[equation]{Corollary}
\newtheorem{definition}[equation]{Definition}
\newtheorem{deflem}[equation]{Definition/Lemma}
\theoremstyle{definition}
\newtheorem{example}[equation]{Example}
\newtheorem{remark}[equation]{Remark}
\DeclareMathOperator{\Hom}{Hom}
\newcommand{\cS}{\mathcal{S}}
\newcommand{\cO}{\mathcal{O}}
\newcommand{\cV}{\mathcal{V}}
\newcommand{\cF}{\mathcal{F}}
\newcommand{\cU}{\mathcal{U}}
\newcommand{\cE}{\mathcal{E}}
\newcommand{\cI}{\mathcal{I}}
\newcommand{\cG}{\mathcal{G}}
\newcommand{\bt}{{\bf t}}
\newcommand{\bn}{{\bf n}}
\newcommand{\bx}{{\bf x}}
\newcommand{\setC}{\mathbb{C}}
\newcommand{\setQ}{\mathbb{Q}}
\newcommand{\setZ}{\mathbb{Z}}
\newcommand{\setN}{\mathbb{N}}
\newcommand{\setP}{\mathbb{P}}
\newcommand{\bI}{{J(l',I)}}
\newcommand{\ring}{\setZ[[\bt^{\pm 1}]]}
\newcommand{\ringd}{\setZ[[\bt^{\pm 1/d}]]}
\newcommand{\hh}{\mathfrak{h}}
\newcommand{\pp}{\mathfrak{p}}
\providecommand{\coloneqq}{\mathrel{:=}}
\begin{document}

\title[Poincar\'e series of surface singularities]{Poincar\'e series associated
with surface singularities}


\author[A.~Némethi]{András Némethi}
\thanks{The author is partially supported by OTKA grants}
\address{Rényi Institute of Mathematics,  1053 Budapest,
Reáltanoda u. 13--15,  Hungary}
\email{nemethi@renyi.hu}

\keywords{plumbed 3-manifolds, rational homology spheres,
surface singularities, universal abelian covers, Poincar\'e
series, Hilbert series,
Campillo--Delgado--Gusein-Zade formula, Seiberg--Witten invariant
conjecture, rational singularities, minimally elliptic
singularities, weighted homogeneous singularities, splice-quotient
singularities, superisolated singularities.}

\subjclass[2000]{Primary  32C35, 32S05, 32S25, 32S45}

\maketitle

\begin{center}
{\em Dedicated to L\^e D\~ung Tr\'ang on his 60th birthday}
\end{center}

\begin{abstract} We unify and generalize formulas obtained by
Campillo, Delgado and Gusein-Zade in their series of articles (see
e.g. \cite{CDG,CDGc,CDGEq}). Positive results are established for
rational and minimally elliptic singularities.  By examples and
counterexamples we also try to find the `limits' of these
identities. Connections with the Seiberg-Witten Invariant Conjecture
and Semigroup Density Conjecture are discussed. 
\end{abstract}

\section{Introduction}\label{Sec:1}

In a long series of articles, Campillo, Delgado and Gusein-Zade
established an identity of type $Z=P$, see e.g.
\cite{CDG,CDGc,CDGEq} (the interested  reader is invited to
consult all their other articles in this subject as well). The identity, in
different articles had different versions and targeted slightly
different objects. One of the versions was valid for rational
normal surface singularities $(X,o)$ and for one of its fixed
resolutions $\pi$; $Z$ was a topological invariant expressed from
the combinatorics of the dual graph, while $P$ was a multi-variable
Poincar\'e series associated with a filtration given by the
divisorial valuations of the irreducible exceptional divisors. In
the echivariant setting, an action of a finite abelian group was
present too. Finally, in a relative version, they considered an
embedded curve $C$ into the rational singularity $(X,o)$, $Z$
was an Alexander type topological invariant expressed from an
embedded resolution graph, while $P$  the Poincar\'e series of a
filtration associated with the evaluations provided by the
normalization of $C$.

The authors provide independent proofs for the different cases;
the proofs are based on the structure of arc-spaces and  an
`euler-characteristic summation' (suggested by the motivic
integration).

The present article has several goals.

First, we create an uniform language to incorporate all the above
different versions (the absolute, echivariant and relative) in
only one setting. Moreover, we show that the statement of the
absolute version (the `Main Identity' \ref{MTH}) stated for all
possible good resolutions implies all the other versions. All the
statements are formulated for any normal surface singularity (with
rational homology sphere link); the formulation involves heavily
the universal abelian cover of $(X,o)$. Then we establish the
identities for some families of singularities (see below).  In the
presentation we embed the statement and the proof into the
classical singularity theory: the presentation is in the spirit of
the work of Artin and Laufer. In fact, one of the crucial
arguments is the vanishing theorem of R\"ohr (a Laufer, or
Grauert-Riemenschneider type vanishing). In this way, the present
proof not only recovers the result of Campillo, Delgado and
Gusein-Zade for rational singularities, but automatically extends
it to the minimally elliptic singularities as well. The article
also establishes a new realization of the topological invariant
$Z$, expressed in terms of (analytic) euler-characteristics of
cycles supported by the exceptional locus, which spans as a bridge
the topological and analytic invariants (see (\ref{PROPTOP})). Its
formula might have an independent interest as well.

In fact, the new unifying language also enables us to formulate
the relative version also in a much higher generality (compared
with \cite{CDGc}).

We would like to stress, that although in the case of rational
singularities it is unimportant which resolution we choose, for
all other singularities this is crucial. For minimally elliptic
singularities we establish our positive results for the minimal
(good) resolution, and by examples we show pathologies valid for
non-minimal ones.

The machinery also allows us to reduce the number of variables.
Surprisingly, if we simplify (rather much) the identities, e.g. we
consider the reduced identities with one variable, even by these
simplified versions we recover
famous classical/older (in general, hardly non-trivial) results
already present in the literature. Our wish is to present 
some of them (regarding e.g. weighted homogeneous,
superisolated or splice-quotient singularities). The relation with
the Seiberg-Witten conjecture \cite{nemethi02:_seiber_witten} and
the Semigroup Density conjecture (regarding the existence of
unicuspidal rational projective plane curves) \cite{MR2192386} is
striking (cf. (\ref{si}). (In fact, 
this connection was the motivation of the author in the subject.)

Finally, by examples, we try to establish the `limits' of the
identity $Z=P$. We provide counterexamples and explicit methods
which provide examples with $Z\not=P$, showing serious geometric
reasons which obstruct the identity,  in general.

\section{Notations and preliminaries}\label{Sec:2}

\subsection{The resolution}\label{DRG}
 Let $(X,o)$ be an isolated
complex analytic normal surface singularity.
 Let $\pi\colon\widetilde{X}\to X$
be a {\em good} resolution with exceptional set $E$ with
irreducible components $E_j$, $j\in \cV:=\{1,\ldots, s\}$. Let
$\Gamma$ denote its dual resolution graph  (for details see e.g.\
\cite{INV}). For any $j\in\cV$ we write $\delta_j$ for the valency 
of $j$ in $\Gamma$.

We assume that the link $\Sigma$ of $X$ is a rational homology
sphere, i.e. $\Gamma$ is a connected tree and  $E_j\approx\setP^1$
for every $j$.

Set $L \coloneqq H_2(\widetilde{X},\setZ)$ and \(L' \coloneqq
H_2(\widetilde{X},\Sigma,\setZ)\).  These groups are free with
bases the classes of \(E_i\) and their duals \(E^*_i\). Here, we
prefer the sign convention $(-E^*_i, E_j)=\delta_{ij}$ (the
Kronecker delta function). The matrix \(I\) of the inclusion
$L\hookrightarrow L'$ in the basis $\{E_i\}_i$ of \(L\) and
$\{-E^*_i\}_i$ of \(L'\) is exactly the intersection matrix
$(E_i,E_j)_{i,j}$.

We denote the group   $L'/L \cong H_1(\Sigma,\setZ)$
by $H$ and set $[l']$ for the class of $l'\in L'$.
Let $\lvert H \rvert $ and $\widehat{H}$ denote its order and
Pontrjagin dual $\Hom(H,\setC^*)$, respectively.
There is a natural isomorphism  $\theta:H\to \widehat{H}$ given by
$[l']\mapsto e^{2 \pi i(l',\cdot)}$.
Sometimes we also write $d\coloneqq |H|$.


\subsection{Positive  cones and representatives}\label{cones}
We write $L_\setQ$ for the group of rational cycles $L\otimes \setQ$.
The form $(\cdot,\cdot )$ has a natural extension to $L_\setQ$.

A rational cycle $x=\sum_jr_jE_j\in L_\setQ$ is called {\em
effective}, denoted by $x\geq 0$, if $r_j\geq 0$ for all $j$.
Their cone is denoted by $L_{\setQ,e}$, while
$L'_e:=L_{\setQ,e}\cap L'$ and $L_e:=  L_{\setQ,e}\cap L$.

A rational cycle $x\in L_\setQ$ is called {\em numerical
effective} if $(x,E_j)\geq 0$ for all $j$. Their cone is denoted
by $NE_\setQ$. We also write (for Lipman's cones)
$\cS_\setQ:=-NE_\setQ$, $\cS':=\cS_{\setQ}\cap L'$
 and $\cS:=\cS_\setQ\cap L$. Notice that both $\cS'$ and $\cS$ are semigroups,
$\cS'$ is generated over $\setN$ by the vectors $E^*_j$. One can
show (using the fact that $I$ is negative definite) that $E^*_j\in
L'_e$ for all $j$.

Define the `unit cube' in $L'$ by
$$Q:=\{r\in L'\,:\, \textstyle{r=\sum_jr_jE_j} \ \mbox{with $r_j\in [0,1)$}\}.$$
For any $h\in H$, we denote the unique representative of $H$ from
$Q$ by  $r_h$. 

\subsection{The ring $\setZ[[\bt^{L'}]]$}\label{ring}
Consider the notations of (\ref{DRG}).
We denote by $\ring$ the ring of formal power series
$\setZ[[t_1^{\pm 1},\ldots,t_s^{\pm 1}]]$ (where $s=|\cV|$).
In fact, it is convenient to consider a larger ring as well,
the ring $\ringd=\setZ[[t_1^{\pm 1/d},\ldots,t_s^{\pm 1/d}]]$
 of formal power series in variables $t_j^{\pm 1/d}$.
$\ringd$ has a natural sub-ring associated with the resolution
$\pi$. Namely,  $\setZ[[\bt^{L'}]]$  is the $\setZ$-linear
combinations of monomials of type
$$\bt^{l'}=t_1^{l'_1}\cdots t_s^{l'_s}, \ \ \ \mbox{where} \ \
l'=\textstyle{\sum_j\,l'_jE_j}\in L'.$$ This admits several
sub-rings, e.g. $\setZ[[\bt^{L'_e}]]$, or $\setZ[[\bt^{\cS'}]]$,
generated by monomials $\bt^{l'}$ with $l'\in L'_e$, or $l'\in
\cS'$ respectively.

In fact, $\setZ[[\bt^{\cS'}]]$ is a usual formal power series ring
in variables $\bt^{E_j^*}$. More precisely, any of its element has
the form
\begin{equation}\label{varx}
\Phi(f)(\bt):= f(\bt^{E^*_1},\ldots, \bt^{E^*_s}), \ \
\mbox{where}\ \ f(x_1,\ldots, x_s)\in\setZ[[{\bf x}]].
\end{equation}

\begin{definition}\label{hcompa}
Any series $S(\bt)=\sum_{l'}a_{l'}\bt^{l'}\in \setZ[[\bt^{L'}]]$
decomposes in a unique way as \begin{equation}\label{hdec}
S=\sum_{h\in H}S_h,\ \ \mbox{where} \ \
S_h=\sum_{[l']=h}a_{l'}\bt^{l'}.\end{equation} $S_h$ is called the
$h$-component of $S$.
\end{definition}

\begin{lemma}\label{hcompb} Consider $F(\bt):=\Phi(f)(\bt)$ for some
$f\in \setZ[[{\bf x}]]$ as in (\ref{varx}). Then
$$F_h(\bt)=
 \frac{1}{\lvert H \rvert} \cdot \sum_{\rho
\in \widehat{H}}\, \rho(h)^{-1}\cdot f(
\rho([E^*_1])\bt^{E^*_1},\ldots, \rho([E^*_s]) \bt^{E^*_s}).$$
\end{lemma}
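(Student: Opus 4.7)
The plan is to prove this by Fourier analysis on the finite abelian group $H$, using the standard character orthogonality relation
\[
\frac{1}{|H|}\sum_{\rho\in\widehat{H}}\rho(h)^{-1}\rho(h')=\begin{cases}1 & \text{if } h'=h,\\ 0 & \text{otherwise,}\end{cases}
\]
as the projector onto the $h$-isotypic component of the natural $H$-grading on $\setZ[[\bt^{L'}]]$.

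First I would unpack $F(\bt)$ monomially. Writing $f(\bx)=\sum_{\bn}c_{\bn}\bx^{\bn}$ with $\bn=(n_1,\ldots,n_s)\in\setN^s$, definition (\ref{varx}) gives
\[
F(\bt)=\sum_{\bn}c_{\bn}\,\bt^{\,\sum_j n_jE_j^*},
\]
so the coefficient of $\bt^{l'}$ in $F$ is $c_{\bn}$ precisely when $l'=\sum_j n_jE_j^*\in\cS'$ (and $0$ otherwise), using that the $E_j^*$ form a basis of $L'$ over $\setQ$. In particular, the $h$-component picks out those $\bn$ for which $\sum_j n_j[E_j^*]=h$ in $H$.

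Next I would insert the orthogonality projector. Using the isomorphism $\theta$ (or directly), for any $l'\in L'$ we have
\[
\mathbf{1}_{[l']=h}=\frac{1}{|H|}\sum_{\rho\in\widehat{H}}\rho(h)^{-1}\rho([l']),
\]
hence
\[
F_h(\bt)=\sum_{l'}a_{l'}\bt^{l'}\cdot\frac{1}{|H|}\sum_{\rho\in\widehat{H}}\rho(h)^{-1}\rho([l']).
\]
Swapping the two summations (legitimate since we work in a formal series ring, and for each fixed monomial only finitely many terms contribute) and using multiplicativity of $\rho$, namely $\rho([l'])=\prod_j\rho([E_j^*])^{n_j}$ when $l'=\sum_j n_jE_j^*$, the inner sum becomes
\[
\sum_{\bn}c_{\bn}\prod_{j}\bigl(\rho([E_j^*])\,\bt^{E_j^*}\bigr)^{n_j}=f\bigl(\rho([E_1^*])\bt^{E_1^*},\ldots,\rho([E_s^*])\bt^{E_s^*}\bigr),
\]
which yields precisely the claimed formula.

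There is no real obstacle here; the only point to be careful about is the convergence/interchange of summations, which is fine because $\setZ[[\bt^{L'}]]$ is a formal power series ring and the $H$-sum over $\widehat{H}$ is finite, so the coefficient of each $\bt^{l'}$ on both sides is a finite $\setZ$-linear combination of the $c_{\bn}$'s. The whole argument is thus just the Fourier decomposition of a formal series graded by the finite group $H$.
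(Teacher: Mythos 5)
Your proof is correct and uses the same idea as the paper: character orthogonality on the finite abelian group $H$ applied monomial by monomial. The paper phrases it slightly differently (it observes $\sum_h F_h = F$ and then checks that the proposed $F_h$ kills every monomial with $[l']\neq h$), but the underlying computation is identical to yours.
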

\begin{proof}
First notice that $\sum_hF_h=F$, hence it is enough to show that
$F_h$ is an $h$-component. Indeed, if $\prod x_j^{n_j}$ is a
monomial of $f$, and $l':=\sum n_jE^*_j$, then $(1/d)\sum_\rho
\rho(-h)\rho([l'])\bt^{l'}$ is $\bt^{l'}$ if $[l']=h$ and zero
otherwise.
\end{proof}

\section{The topological/combinatorial invariant}

\subsection{The rational functions
$\mathbf{ \zeta(\bt)}$}\label{zeta} Our main topological object is
the rational function (in variables $x_j=\bt^{E_j^*}$), or its Taylor
expansion at the origin (cf. \ref{varx}):
\begin{equation}\label{zetaelso}
\zeta (\bt):=\Phi(z)(\bt), \ \ \mbox{where} \ \ z({\bf x}):=
\prod_{j\in \cV}(1-x_j)^{\delta_j-2}.
\end{equation}
Notice that by (\ref{hcompb}), its $h$-component is
\begin{equation}\label{zeta2}
\zeta_h(\bt) \coloneqq \frac{1}{\lvert H \rvert} \cdot \sum_{\rho
\in \widehat{H}}\, \rho(h)^{-1}\cdot \prod_{j\in\cV}
{(1-\rho([E^*_j]) \bt^{E^*_j})}^{\delta_j-2}.
\end{equation} $\zeta_h$ is the `multivariable' version of the
rational function used by
\cite{nemethi02:_seiber_witten,nemethi04:_seiber_witten,nemethi05:_seiber_witten}
in the Reidemeister-Turaev computation of $\Sigma$, by \cite{Opg}
in the computation of the geometric genus of splice-quotient
singularities, by \cite{NO1,NO2} in the proof of the Neumann-Wahl
Casson Invariant Conjecture for splice-quotients; see also
\cite{BN}.  The multivariable version appears in \cite{CDGEq}. See
also the comments of (\ref{toppart}).

\subsection{The CDG-series} In their series of articles, 
 Campillo, Delgado and Gusein-Zade  (see
e.g. \cite{CDG,CDGEq}) considered the next infinite series. In fact, it 
tautologically equals the Taylor expansion at the
origin of $\zeta$. One starts with the following coefficients: for
any $n\geq 0$ and $\delta\geq 0$, set $\chi_{\delta,n}$ as the
coefficient of $x^n$ in the Taylor expansion at the origin of the
the function $(1-x)^{\delta-2}$. Then, with the notation
$l'=\sum_j n_jE_j^*\in \cS'$, set
\begin{equation}\label{zht}
Z(\bt):=\sum_{l'\in\cS'} \big(\prod_{j\in\cV}\chi_{\delta_j,n_j}
\big)\cdot \bt^{l'}.
\end{equation}
Since $Z=\Phi(\sum_{\bn\geq 0}(\prod_{j} \chi_{\delta_j,n_j}){\bf
x}^{{\bf n}})=\Phi(z)$, one gets:

\begin{proposition}\label{TH1}
The Taylor expansion at the origin (in variables $\bt^{E^*_i}$)
 of $\zeta$ is the series  $Z(\bt)$.
\end{proposition}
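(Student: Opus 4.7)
The plan is to directly expand both sides and match coefficients; the identity is essentially tautological given the definitions.

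First, by the very definition of $\chi_{\delta,n}$ we have the single-variable expansion $(1-x)^{\delta-2}=\sum_{n\geq 0}\chi_{\delta,n}x^n$. Multiplying over $j\in\cV$ yields the multivariate expansion
\[
z(\mathbf{x})=\prod_{j\in\cV}(1-x_j)^{\delta_j-2}=\sum_{\bn\in\setN^s}\Bigl(\prod_{j\in\cV}\chi_{\delta_j,n_j}\Bigr)\mathbf{x}^{\bn},
\]
where $\mathbf{x}^{\bn}=\prod_j x_j^{n_j}$, viewed as an identity in $\setZ[[\mathbf{x}]]$.

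Next I would apply the substitution $\Phi\colon x_j\mapsto\bt^{E^*_j}$ from (\ref{varx}) term by term, turning $\mathbf{x}^{\bn}$ into $\bt^{\sum_j n_j E^*_j}$:
\[
\zeta(\bt)=\Phi(z)(\bt)=\sum_{\bn\in\setN^s}\Bigl(\prod_{j\in\cV}\chi_{\delta_j,n_j}\Bigr)\bt^{\sum_j n_j E^*_j}.
\]

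The final step is to reindex the sum by $l'$ in place of $\bn$. Since the intersection matrix is negative definite, the dual classes $\{E^*_j\}$ form a $\setQ$-basis of $L_\setQ$, so $\bn\mapsto l':=\sum_j n_jE^*_j$ is injective on $\setN^s$. By the description of $\cS'$ recalled in (\ref{cones}) as the $\setN$-span of the $E^*_j$, the image of this map is exactly $\cS'$, and each coordinate $n_j$ is recovered as the $E^*_j$-coefficient of $l'$. Reindexing the sum accordingly and comparing with (\ref{zht}) delivers $\zeta(\bt)=Z(\bt)$.

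There is no genuine obstacle; the only point requiring even mild care is the bijection $\setN^s\leftrightarrow\cS'$, which rests on negative definiteness of the intersection form. Otherwise, $\Phi$, $\cS'$ and the coefficients $\chi_{\delta_j,n_j}$ have been set up precisely so that the equality holds by direct substitution.
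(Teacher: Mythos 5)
Your proof is correct and is essentially the paper's own argument, which is compressed into the one-line observation just before the proposition that $Z=\Phi(\sum_{\bn\geq 0}(\prod_j\chi_{\delta_j,n_j})\bx^{\bn})=\Phi(z)$. You merely spell out the reindexing $\bn\leftrightarrow l'\in\cS'$ via the basis $\{E^*_j\}$ of $L'$, a point the paper leaves implicit.
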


\subsection{} Next, we present a new (to the best of the author's
knowledge), and more subtle appearance of $\zeta(\bt)$. It uses
the euler-characteristic of cycles supported by $E$, hence (as we
will see) it creates the right bridge connecting topology with
sheaf-theoretical invariants. In order to state it, we need some
preparation.

As usual, let $K_{\widetilde{X}}\in L'$ be the {\em canonical
cycle}  associated with $\pi$. It is identified by the equations
$(K_{\widetilde{X}}+E_j,E_j)=-2$ for any $j$.  The (Riemann-Roch)
euler-characteristic $\chi(l)=-(l,l+K_{\widetilde{X}})/2$ can be
extended to any $l'\in L_\setQ$ by
$\chi(l')=-(l',l'+K_{\widetilde{X}})/2$. Clearly,
\begin{equation}\label{chiadd}
\chi(l'_1+l'_2)=\chi(l'_1)+\chi(l'_2)-(l'_1,l'_2).
\end{equation}

For any subset $I\subset\cV$ we write $E_I:=\sum_{j\in I}E_j$.

\begin{deflem}\label{DL}
\begin{enumerate}
\item\label{DL1}
For any fixed $l'\in L'$ and subset $I\subset \cV$, there is a
unique minimal subset $\bI\subset \cV$ which contains $I$, and
such that
\begin{equation}\label{star}
\mbox{there is no $j\in \cV\setminus \bI$ with
$(E_j,l'+E_\bI)>0$.}\end{equation}

\item\label{DL2} $\bI$ can
be found by the following algorithm: one constructs a sequence
$\{I_m\}_{m=0}^k$ of subsets of $\cV$, with $I_0=I$,
$I_{m+1}=I_m\cup \{j(m)\}$, where the index $j(m)$ is determined
as follows. Assume that $I_m$ is already constructed. If $I_m$
satisfies (\ref{star}) we stop and $m=k$. Otherwise, there exists
at least one $j$ with $(E_j,l'+E_{I_m})>0$. Take $j(m)$ one of
these $j$ and continue the algorithm  with $I_{m+1}$. Then
$I_k=\bI$.
\end{enumerate}
\end{deflem}
\begin{proof}
For (\ref{DL1}) notice that if $J_1$ and $J_2$ satisfies the
wished requirement (\ref{star}) of $\bI$ then $J_1\cap J_2$
satisfies too. The part (\ref{DL2}) is a version of the well-known
Laufer algorithm, compare  e.g. with (\ref{fact}).
\end{proof}

\begin{theorem}\label{PROPTOP}
\begin{equation}\label{EQ:100}
\sum_{l'\in\cS'}\sum_I
(-1)^{|I|+1}\chi(l'+E_\bI){\bt}^{l'}=Z(\bt),
\end{equation}
where the sum $\sum_I$ runs over all the subsets $I$ of \,$\cV$.
In other words, with the notation $l'=\sum_jn_j E^*_j$, one has
\begin{equation}\label{uj}
\sum_{\bn\geq 0}\sum_I(-1)^{|I|+1}\chi(l'+E_{J(l',I)})\bx^\bn=z(\bx)
\end{equation}
(where $\bx^\bn= x_1^{n_1}\cdots x_s^{n_s}$), or, for any  $l'=\sum_jn_j E^*_j$
$$\sum_I(-1)^{|I|+1}\chi(l'+E_\bI)=\prod_j\chi_{\delta_j,n_j}.$$
\end{theorem}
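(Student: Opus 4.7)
Extracting the coefficient of $\bt^{l'}$ from (\ref{EQ:100}) and comparing with (\ref{zht}), the theorem reduces to the coefficient-wise identity
$$\sum_{I \subseteq \cV}(-1)^{|I|+1}\chi(l'+E_{J(l',I)}) = \prod_{j \in \cV}\chi_{\delta_j,n_j}$$
for each $l' = \sum_j n_j E_j^* \in \cS'$. The plan is to massage the left-hand side into a M\"obius-weighted sum over the lattice of `closed' subsets of $\cV$, and then evaluate it inductively along the tree $\Gamma$.

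First, I simplify the summand. Using (\ref{chiadd}), the computation $(l',E_J) = -\sum_{j\in J}n_j$, and the fact that $\chi(E_J)$ equals the number of connected components of the subgraph of $\Gamma$ induced on $J$ (each component $C$ satisfies $\chi(E_C)=1$ by a short computation with the adjunction identity $(E_j,K_{\widetilde X})=-2-E_j^2$), I rewrite
$$\chi(l'+E_J) = \chi(l') + \chi(E_J) + \sum_{j\in J}n_j.$$
The $\chi(l')$-piece vanishes in the sum because $\sum_I(-1)^{|I|+1}=0$ when $|\cV|\geq 1$. Next, set $\mathcal{L}(l') := \{J\subseteq\cV: J \text{ satisfies }(\ref{star})\}$. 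By the proof of (\ref{DL}\ref{DL1}), $\mathcal{L}(l')$ is closed under intersection; moreover $\emptyset\in\mathcal{L}(l')$ since $l'\in\cS'$ gives $(E_j,l')\leq 0$ for every $j$. Hence $\mathcal{L}(l')$ is a lattice with minimum $\emptyset$ and maximum $\cV$, and $I\mapsto J(l',I)$ is the associated closure. Grouping by $J = J(l',I)$ and setting $c_{l',J} := \sum_{I:\,J(l',I)=J}(-1)^{|I|+1}$, the identity $\sum_{J'\in\mathcal{L}(l'),\,J'\subseteq J}c_{l',J'} = \sum_{I\subseteq J}(-1)^{|I|+1}$ (which equals $-1$ if $J=\emptyset$ and $0$ otherwise) yields, via M\"obius inversion on $\mathcal{L}(l')$, $c_{l',J} = -\mu_{\mathcal{L}(l')}(\emptyset,J)$. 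Altogether,
$$\sum_{I}(-1)^{|I|+1}\chi(l'+E_{J(l',I)}) = -\sum_{J\in\mathcal{L}(l')}\mu_{\mathcal{L}(l')}(\emptyset,J)\Bigl(\chi(E_J)+\sum_{j\in J}n_j\Bigr).$$

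The remaining and main step is to show this M\"obius-weighted sum equals $\prod_j\chi_{\delta_j,n_j} = \prod_j(-1)^{n_j}\binom{\delta_j-2}{n_j}$. The crucial structural input is the boundary-local description $\mathcal{L}(l')=\{J\subseteq\cV:\forall j\notin J,\;|\{i\in J:i\sim j\}|\leq n_j\}$. My approach is a tree induction: peel off a leaf $j_0$ of $\Gamma$ with unique neighbor $j_1$ and split the sum according to whether $j_0\in J$ or $j_0\notin J$. The constraint at $j_0$ reduces to the single inequality `number of neighbors of $j_0$ in $J$ is at most $n_{j_0}$', i.e. $[j_1\in J]\leq n_{j_0}$, which should extract the leaf factor $\chi_{\delta_{j_0},n_{j_0}}$, while the rest reduces to an analogous problem on $\Gamma\setminus\{j_0\}$ (with the local datum at $j_1$ suitably adjusted to account for the absorbed edge). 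The main obstacle is precisely this bookkeeping: because the constraint couples $j_0$ to $j_1$, the lattice $\mathcal{L}(l')$ does not split as a direct product, and relating $\mu_{\mathcal{L}(l')}$ to the M\"obius function of the truncated lattice requires a delicate inclusion-exclusion. It is through this tree recursion that the product form $\prod_j(1-\bt^{E_j^*})^{\delta_j-2}$ of $\zeta(\bt)$ is meant to emerge.
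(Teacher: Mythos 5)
Your reformulation is correct as far as it goes, and it does identify the key structural facts: the decomposition $\chi(l'+E_J)=\chi(l')+\chi(E_J)+\sum_{j\in J}n_j$ via (\ref{chiadd}) and $(l',E_J)=-\sum_{j\in J}n_j$, the fact that $\chi(E_J)$ is the number of components of the subgraph on $J$, the closure structure of $\mathcal{L}(l')$ as an intersection-closed family, and the boundary-local description $\mathcal{L}(l')=\{J:\forall j\notin J,\;|\{i\in J:i\sim j\}|\leq n_j\}$. The grouping by $J=J(l',I)$ and the resulting M\"obius coefficient $-\mu_{\mathcal{L}(l')}(\emptyset,J)$ are also correctly derived. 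But you stop exactly where the proof actually starts: the tree recursion. You write that ``relating $\mu_{\mathcal{L}(l')}$ to the M\"obius function of the truncated lattice requires a delicate inclusion-exclusion'' --- that step is the entire content of the theorem, and it is not carried out. As written, this is a setup, not a proof.

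The paper's proof is also an induction on the tree, peeling off a leaf $s$ with $\delta_s=1$ and unique neighbor $j_0$, but it avoids your M\"obius detour entirely and therefore never has to confront the lattice-splitting problem you ran into. Instead of grouping $I$ by its closure $J(l',I)$, the paper keeps the sum indexed by all $I\subseteq\cV$ and splits it into $I\not\ni s$ and $I\ni s$. Two pointwise identities do all the work. For $s\notin I$, the leaf structure forces $\chi(l'+E_{J(l',I)})=\chi(l'+E_{J(l',I)\setminus s})$ (because if $s$ gets added, it is added with $(l'+E_{I_{k-1}},E_s)=1$, which does not change $\chi$), giving $\chi(l'+E_{\bI}^\Gamma)-\chi(l')=\chi(l'_0+E_{\bI}^{\Gamma_0})-\chi(l'_0)$ on $\Gamma_0=\Gamma\setminus s$. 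For $s\in I$, one has $J(l',I)=J^{\Gamma_0}(l'_0,I\setminus s)\cup s$ and a shifted identity with $l'_0$ replaced by $l'_0-E^{*,\Gamma_0}_{j_0}$. Summing the two halves over $\bn$ and using $\sum_{I\ni s}(-1)^{|I|+1}c=\sum_{I\not\ni s}(-1)^{|I|+1}c=0$ produces exactly the factor $(1-x_{j_0})\sum_{n_s\geq 0}x_s^{n_s}$, matching $z_\Gamma=z_{\Gamma_0}\cdot(1-x_{j_0})/(1-x_s)$. The coupling between $s$ and $j_0$ that blocked your lattice argument is, in this direct formulation, precisely what produces the leaf factor, rather than an obstruction to be untangled. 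If you want to salvage your M\"obius approach you would need to prove a recursion for $\mu_{\mathcal{L}(l')}(\emptyset,\cdot)$ under leaf removal; the paper's identities (\ref{Gegy}) and (\ref{Gketto}) show that this is possible, but working at the level of $I$ rather than $J$ is strictly simpler.
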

\begin{proof}  In the proof we use induction over $s=|\cV|$. If
$s=1$ then the identity is elementary, whose verification is left
to the reader. Hence, we assume $s>1$. Fix a vertex in
$\cV=\{1,\ldots,s\}$ corresponding to the index $s$ (after  a
possible  reordering) so that $\delta_s=1$. Let
$\Gamma_0:=\Gamma\setminus \{s\}$, and let $j_0$ be that vertex of
$\Gamma_0$ which is adjacent to $s$ in $\Gamma$.

Let $f_\Gamma(\bx)$ denote the left  hand side of (\ref{uj}). We
wish to show that $f_\Gamma(\bx)=z_\Gamma(\bx)$. Let
$\bx_0:=(x_1,\cdots,x_{s-1})$. The induction start with the
identity
\begin{equation}\label{z0}
z_{\Gamma}(\bx)=z_{\Gamma_0}(\bx_0)\cdot \frac{1-x_{j_0}}{1-x_s}.
\end{equation}
We will establish similar identity for $f_\Gamma$.  For this we
will use the notation $l'_0:=\sum_{j<s} n_jE^{*,\Gamma_0}_j$ for
any $l'=\sum_j n_j E^*_j$ (here $E^{*,\Gamma_0}_j$ is the dual of
$E_j$ in $\Gamma_0$). Notice that for any $Z$ supported in $\Gamma_0$
one has
\begin{equation}\label{3.uj}
(l',Z)=(l'_0,Z) \ \ \mbox{and} \ \ (-E^{*,\Gamma_o}_{j_0},Z)=(E_s,Z).
\end{equation}

First, we fix some $l'\in\cS'$ and a subset $I\subset \cV$ with
$s\not\in I$. If $s\in\bI$, we may assume (cf. the notations of
(\ref{DL})) that $I_{k-1}=\bI\setminus s$. Since $(l',E_s)\leq 0$,
$\delta_s=1$ and $(l'+E_{I_{k-1}},E_s)>0$, we get that, in fact,
$(l'+E_{I_{k-1}},E_s)=1$. Hence
$\chi(l'+E_{I_k})=\chi(l'+E_{I_{k-1}})$.  In other words, if
$s\not\in I$, then
\begin{equation}\label{egy}
\chi(l'+E_{\bI})=\chi(l'+E_{\bI\setminus s}).\end{equation} Using
$E_{J(l',I)\setminus s}=E^{\Gamma_0}_{J(l'_0,I)}$, (\ref{3.uj})  and
(\ref{chiadd}), (\ref{egy}) can be rewritten in the following
identity, where in the right hand side all invariants are
considered in $\Gamma_0$:
\begin{equation}\label{Gegy}
\chi(l'+E_{\bI}^\Gamma)-\chi(l')=\chi(l'_0+E_{\bI}^{\Gamma_0})-\chi(l'_0).
\end{equation}
Next, fix again $l'\in\cS'$ and take $I\subset \cV$ with  $s\in
I$. By (\ref{3.uj}) one has
$J(l',I)=J^{\Gamma_0}(l_o',I\setminus s)\cup s$. Using this, 
 (\ref{chiadd}) and (\ref{3.uj}) with a computation we get
\begin{equation}\label{Gketto}
\chi(l'+E_\bI^\Gamma)-\chi(l'+E_s)=\chi(l'_0-E^{*,\Gamma_0}_{j_0}+
E^{\Gamma_0}_{J(l'_0,I\setminus
s)})-\chi(l'_0-E^{*,\Gamma_0}_{j_0}).
\end{equation}
Since for any constant $c$, one has $\sum_{I:I\not\ni s}(-1)^{|I|+1}c
=\sum_{I:I\ni s}(-1)^{|I|+1}c=0$,
the identities (\ref{Gegy}) and (\ref{Gketto}) read as
\begin{equation*}
\sum_{\bn\geq 0}\sum_{I\not\ni
s}(-1)^{|I|+1}\chi(l'+E_{J(l',I)})\bx^\bn=f_{\Gamma_0}(\bx_0)\cdot
\sum_{n_s\geq 0} x_s^{n_s};
\end{equation*}
\begin{equation*}
\sum_{\bn\geq 0}\sum_{I\ni
s}(-1)^{|I|+1}\chi(l'+E_{J(l',I)})\bx^\bn=-f_{\Gamma_0}(\bx_0)
x_{j_0}\cdot \sum_{n_s\geq 0}x_s^{n_s}.
\end{equation*}
Hence $f_\Gamma(\bx)=f_{\Gamma_0}(1-x_{j_0})\sum_{n_s\geq
  0}x_s^{n_s}$. 
\end{proof}

\section{The analytic invariant}

\subsection{The setup}\label{setup} We start with a normal surface singularity
$(X,o)$, and we fix one of its good resolutions $\pi$.  In the
sequel $L$ and $L'$ will stay for the corresponding lattices
associated with $\pi$. Moreover, $L$ (respectively $L_\setQ$) will
also be identified with integral (rational) divisorial cycles
supported by
$E$. 

We denote by $c:(Y,o)\to (X,o)$ the universal abelian cover of
$(X,o)$. Let $\widetilde{Y}$ be the normalized pull-back of
$\widetilde{X}$ by $c$, $\pi_Y:\widetilde{Y}\to Y$ the pull-back
of $\pi$, and $\widetilde{c}: \widetilde{Y}\to \widetilde{X}$ the
induced finite map making the diagram commutative. We write
$\widetilde{c}^*$ for the pull-back of integral/rational cycles.

By \cite[(3.3)]{Line}, $\widetilde{c}^*(l')$ is an {\em integral}
cycle for any $l'\in L'$.

\begin{definition}\label{filtr}
We define a filtration on the local ring of holomorphic functions
$\cO_{Y,o}$:  for any $l'\in L'$, we set
\begin{equation}\label{eq:F1}
\cF(l'):=\{ f\in \cO_{Y,o}\ | \ div(f\circ \pi_Y)\geq
\widetilde{c}^*(l')\}.
\end{equation}
Notice that the natural action of $H$ on $Y$ induces an action on
$\cO_{Y,o}$ which keeps  $\cF(l')$ invariant. Therefore, $H$ acts
on $\cO_{Y,o}/\cF(l')$ too. For any $l'\in L'$, let
$\hh_{[l']}(l')$ be the dimension of the $\theta([l'])$-eigenspace
$(\cO_{Y,o}/\cF(l'))_{\theta([l'])}$. Then one defines the Hilbert
series \,$H(\bt)$ by
\begin{equation}\label{eq:33}
H(\bt):=\sum_{l'\in L'} \hh_{[l']}(l')\bt^{l'}.
\end{equation}
\end{definition}

\begin{remark}\label{P} Campillo, Delgado and Gusein-Zade in
\cite{CDG,CDGEq} use two other series as well. Here we present their
relationship  with the above Hilbert series. Set
\begin{equation}\label{eq:2}
L(\bt):=\sum_{l'\in L'}\dim_\setC
 (\cF(l')/\cF(l'+E))_{\theta([l'])}\cdot \bt^{l'}\in
\setZ[[\bt^{L'}]],
\end{equation}
where $E=\sum_jE_j$ as above. Since $\cF(l')=\cF(l'+E_j)$ if
$(l',E^*_j)<0$, it is easy to see that
$L(\bt)\prod_j(t_j-1)$ is an element of $\setZ[[\bt^{L'_e}]]$.
Hence the next infinite power series is well-defined:
\begin{equation}
P(\bt):= -\frac{L(\bt)\prod_j(t_j-1)}{1-\bt^E}=
-L(\bt)\prod_j(t_j-1)\cdot \sum_{k\geq 0} \bt^{kE}\in
\setZ[[\bt^{L'_e}]].
\end{equation}
Notice that $\hh_h(l')=0$ for $l'\leq 0$. This and the obvious
relation
$$H(\bt)(1-\bt^E)=\bt^E\cdot L(\bt)$$
show that
\begin{equation}\label{eq:4}
P(\bt)=-H(\bt)\cdot \prod_j(1-t_j^{-1}) \ \ \mbox{in
$\setZ[[\bt^{L'_e}]]$}.
\end{equation}
\end{remark}

Apparently,  taking  $P$ instead of $H$, one loses some analytic
information of $H$ (because $\prod_j(1-t_j^{-1})$ is a zero divisor).
Nevertheless, in \cite{CDG} there is an argument that, in fact, 
$P$  and $H$ contain the same amount of information (without providing
a formula for the inversion). 

In the next lemma, we show that this can be done very explicitly. In order
to state the expression, first we have to notice that, in fact, $P(\bt)$ is
supported on $\cS'$ (this is an automatically consequence of the 
discussion from (\ref{REF}), but can be verified directly as well). 

\begin{lemma}\label{lem:new}\cite{BN_splice1}
Write $P(\bt)=\sum_{l'\in   \cS'}\pp(l')\bt^{l'}$ for its coefficients. 
Then:
$$H(\bt)=P(\bt)\cdot \sum_{a\in L,\, a\not\geq 0}\bt^{-a}.$$
In other word, 
\begin{equation}\label{eq:sum}
\hh_{[l']}(l')=\sum_{a\in L,\, a\not\geq 0} \,\pp(l'+a).
\end{equation}
\end{lemma}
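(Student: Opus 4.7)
The plan is to verify the stated identity \eqref{eq:sum} coefficient-wise, by combining the inversion of the relation $P=-H\prod_j(1-t_j^{-1})$ from \eqref{eq:4} with a multi-variable inclusion--exclusion argument on subsets of $\cV$. Extracting coefficients from \eqref{eq:4} and using that $E_I\in L$, so $[l'+E_I]=[l']$, I read off the inversion
\[
\pp(l')\;=\;\sum_{I\subseteq\cV}(-1)^{|I|+1}\hh_{[l']}(l'+E_I)\qquad(l'\in L').
\]
I would next verify that the sum $\sum_{a\not\geq 0}\pp(l'+a)$ actually has only finitely many non-zero terms: a non-vanishing contribution forces $x:=l'+a\in\cS'$ with $x\not\geq l'$, and the set $\cS'\cap\{x:x\not\geq l'\}=\bigcup_k\bigl(\cS'\cap\{x_k<l'_k\}\bigr)$ has bounded slices, since $\cS'$ lies in the $\setR_{\geq 0}$-cone spanned by the $E^*_j$, whose coordinates in the basis $\{E_i\}$ are strictly positive (as $\Gamma$ is a tree, $-I^{-1}$ has strictly positive entries).

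The combinatorial heart of the proof is the identity
\[
\sum_{I\subseteq\cV}(-1)^{|I|+1}\,[b-E_I\not\geq 0]\;=\;\delta_{b,0}\qquad(b\in L),
\]
which I would prove by writing $[b-E_I\not\geq 0]=1-[b\geq E_I]$: the first piece contributes $\sum_I(-1)^{|I|+1}=0$ since $|\cV|\geq 1$, and the second piece is non-zero only when $b\geq 0$ and $I\subseteq J_+:=\{j:b_j\geq 1\}$, in which case $\sum_{I\subseteq J_+}(-1)^{|I|+1}=-(1-1)^{|J_+|}$ equals $-1$ precisely when $J_+=\emptyset$, i.e.\ $b=0$. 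Substituting the inversion formula into $\sum_{a\not\geq 0}\pp(l'+a)$ and reindexing the pairs $(a,I)$ via $b:=a+E_I$ then collapses the sum to $\sum_{b\in L}\hh_{[l']}(l'+b)\cdot\delta_{b,0}=\hh_{[l']}(l')$, as desired.

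The main obstacle will be rigorously justifying this rearrangement, because individual terms $\hh_{[l']}(l'+a+E_I)$ can be non-zero for infinitely many pairs $(a,I)$, even though their $I$-sum for each fixed $a$ collapses to $\pp(l'+a)$ and so vanishes outside the finite set $\{a:a\not\geq 0,\,l'+a\in\cS'\}$. My intended remedy is to lift the manipulation to generating functions: the combinatorial identity above is precisely the coefficient-wise assertion that $-\prod_j(1-t_j^{-1})\cdot T(\bt)=1$, where $T(\bt):=\sum_{a\in L,\,a\not\geq 0}\bt^{-a}$, an equality valid without any convergence hypothesis. Combined with $P=-H\prod_j(1-t_j^{-1})$ (a finite-type convolution) and the coefficient-wise finiteness of $P\cdot T$ established above, associativity of these well-defined convolutions forces $H=P\cdot T$, which is the claim \eqref{eq:sum}.
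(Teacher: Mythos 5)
Your approach is genuinely different from the paper's. The paper (Remark~\ref{rem.new}) proves, by induction on $l\in L_{\geq 0}$, the family of \emph{finite} identities
\[
\dim\frac{H^0(\cO(-l'))}{H^0(\cO(-l'-l))}=\sum_{a\geq 0,\ a\not\geq l}\pp(l'+a),
\]
then specialises $l'=r_h$ and lets $l$ grow; you try instead to obtain \eqref{eq:sum} in one stroke by reindexing a bilateral double sum. Your inversion $\pp(l')=\sum_I(-1)^{|I|+1}\hh_{[l']}(l'+E_I)$, your finiteness observation, and the inclusion--exclusion identity $\sum_{I\subseteq\cV}(-1)^{|I|+1}[b-E_I\not\geq 0]=\delta_{b,0}$ are all correct (there is a small sign slip in your analysis of the ``second piece'', but the conclusion $\delta_{b,0}$ stands).

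The gap is the rearrangement, and the proposed associativity argument does not close it. Writing $U:=-\prod_j(1-t_j^{-1})$ and $T:=\sum_{a\not\geq 0}\bt^{-a}$, you know $HU=P$, $UT=1$, and that $P\cdot T$ is coefficient-wise finite; but the \emph{triple} convolution $H\cdot U\cdot T$ is \emph{not} coefficient-wise finite, because neither $\supp H$ nor $\supp T$ lies in a strictly convex cone --- each is the complement of one, so for a fixed target exponent infinitely many splittings with $l_2\in\supp U$, $\hh_{[l_1]}(l_1)\neq 0$, $l_3\in\supp T$ occur (take $l_1$ large in one coordinate and very negative in another). Hence $(HU)T=H(UT)$ is precisely the rearrangement under dispute, not a free consequence of both sides being well defined. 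In the one-variable toy case the rearranged sum is a telescope, $\sum_{a<0}\bigl(\hh(l'+a+1)-\hh(l'+a)\bigr)=\hh(l')-\lim_{N\to\infty}\hh(l'-N)$; were $\hh$ the constant function $1$, the left side would vanish while the right side equals $1$. The identity therefore genuinely uses that $\hh_h$ vanishes on $L'_{\leq 0}$ (noted after \eqref{eq:2}), a fact your argument never invokes.

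To repair the proof one must truncate (say to the finite box $-NE\leq a\leq ME$), rearrange the now-finite double sum, and verify that the boundary contributions near the corners of the box die as $N,M\to\infty$, the lower-corner term being exactly $-\hh_{[l']}(l'-NE)$ and dying by the vanishing of $\hh_h$ on $L'_{\leq 0}$. This bookkeeping is the real content. The paper's induction over $l$, comparing $l$ with $l+E_i$, carries out exactly this telescoping one unit cycle at a time and avoids the infinite rearrangement altogether; it is the cleaner way to make your inclusion--exclusion idea rigorous.
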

\noindent (The elements of $\cS'$ have the property that if one of the coefficients
tends to infinity then all of them do, hence the sum in (\ref{eq:sum}) is
finite.)  For the proof  see (\ref{rem.new}).

\subsection{Reformulation  of $H(\bt)$ and $P(\bt)$.}\label{REF}
The integers $\hh_h(l')$ can be reinterpreted in terms of
sheaf-cohomology. We start with the following result (for the
definition of $r_h$ see (\ref{cones})):

\begin{proposition}\label{fact}
\begin{enumerate}
\item\label{Fact1} \cite[(3.7)]{Line} \cite[(3.1)]{OkumaRat} For
any $h\in H$ there exists a divisor $D_h$ on $\widetilde{X}$
numerically equivalent to $r_h$ (i.e.
$c_1(\cO_{\widetilde{X}}(D_h))=r_h$) such that
\begin{equation*}
\widetilde{c}_*\cO_{\widetilde{Y}}=\bigoplus_{h\in H}\
\cO_{\widetilde{X}}(-D_h),
\end{equation*}
where the last sum is an $H$-eigenspace decomposition:
$\cO_{\widetilde{X}}(-D_h)$ is the $\theta(h)$-eigenspace of \,
$\widetilde{c}_*\cO_{\widetilde{Y}}$. Moreover, the Chern class
$c_1:Pic(\widetilde{X})\to L' $, $D\mapsto
c_1(\cO_{\widetilde{X}}(D))$, admits a group section $d_1:L'\to
Pic(\widetilde{X})$ given by
$d_1(l+r_h)=\cO_{\widetilde{X}}(l+D_h)$ (where $l\in L$ is
identified with an integral cycle).
\item\label{fact2} \cite[(4.2)]{Line} For any $l'\in L'$ there
exists a unique minimal $s(l')\in \cS'$ such that $l'\leq s(l')$ and
$[l']=[s(l')]$.

$s(l')$ can be computed by the following (Laufer) algorithm. One
constructs a `computation sequence' $x_0,\ldots, x_k\in L$ with
$x_0=0$ and $x_{m+1}=x_m+E_{j(m)}$, where the index $j(m)$ is
determined by the following principle. Assume that $x_m$ is
already constructed. If $l'+x_m\in\cS'$ then one stops and $k=m$.
Otherwise, there exists at least one $j$ with $(l'+x_m,E_j)>0$.
Take for $j(m)$ one of these $j$, and continue with $x_{m+1}$. The
algorithm stops after a finitely many
steps, and $l'+x_k=s(l')$. 
\end{enumerate}
\end{proposition}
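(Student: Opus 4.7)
\textbf{Plan for (\ref{Fact1}).} Since $c\colon Y\to X$ is the universal abelian cover with Galois group $H$, the induced map $\widetilde{c}\colon\widetilde{Y}\to\widetilde{X}$ is also finite Galois with group $H$, so the natural $H$-action on $\widetilde{c}_*\cO_{\widetilde{Y}}$ decomposes it canonically into character eigensheaves
\[
\widetilde{c}_*\cO_{\widetilde{Y}}=\bigoplus_{h\in H}\cL_h,
\]
with $\cL_h$ the $\theta(h)$-eigenspace. Each $\cL_h$ is rank one and torsion-free on the smooth surface $\widetilde{X}$, hence invertible; write $\cL_h=\cO_{\widetilde{X}}(-D_h)$. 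The identification $c_1(\cO_{\widetilde{X}}(D_h))=r_h$ follows from the standard structure theorem for abelian covers: the branching data of $\widetilde{c}$ along $E$ is encoded exactly by the fractional cycles $r_h$, consistent with the integrality of $\widetilde{c}^*(l')$ for every $l'\in L'$ \cite[(3.3)]{Line}. The ring structure on $\widetilde{c}_*\cO_{\widetilde{Y}}$ gives multiplication maps $\cL_h\otimes\cL_{h'}\to\cL_{h+h'}$, which are isomorphisms of line bundles; hence $D_h+D_{h'}\sim D_{h+h'}$ in $\mathrm{Pic}(\widetilde{X})$, so $d_1(l+r_h)\coloneqq\cO_{\widetilde{X}}(l+D_h)$ defines a group homomorphism $L'\to\mathrm{Pic}(\widetilde{X})$ splitting the Chern class.

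\textbf{Plan for (\ref{fact2}): existence and uniqueness of $s(l')$.} Set $T(l'):=\{x\in\cS':x\geq l',\ [x]=[l']\}$. For non-emptiness, take $s=l'+Nd\sum_jE_j^*$ with $d=|H|$: using the convention $(-E_k^*,E_j)=\delta_{kj}$ one has $(s,E_j)=(l',E_j)-Nd\leq 0$ for $N$ sufficiently large; furthermore $s\geq l'$ since $E_j^*\in L'_e$, while $Nd\sum_j E_j^*\in L$ (as $d$ kills $H$), so $[s]=[l']$. For closure under componentwise $\min$: given $s_1,s_2\in T(l')$ and $m=\min(s_1,s_2)$, the hypothesis $[s_1]=[s_2]$ forces $s_1-s_2\in L$, hence $m-s_1\in L$ and $[m]=[l']$; and for each $j$ with (say) $m_j=s_{1,j}$,
\[
(m,E_j)=(s_1,E_j)+\sum_{k\neq j}(m_k-s_{1,k})(E_k,E_j)\leq 0,
\]
since every summand on the right is non-positive (the first because $s_1\in\cS'$, the others because $m_k-s_{1,k}\leq 0$ and $(E_k,E_j)\geq 0$ for $k\neq j$). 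Thus $m\in T(l')$, and the minimum $s(l'):=\min T(l')$ exists and is unique.

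\textbf{The algorithm and the main obstacle.} I would prove by induction on $m$ that $x_m\in L$ and $l'+x_m\leq s(l')$, the base case $x_0=0$ being clear. If $l'+x_m\notin\cS'$, the crucial claim is that the chosen $E_{j(m)}$ has strictly positive coefficient in $s(l')-l'-x_m$. Otherwise that coefficient would be $0$, so $(s(l')-l'-x_m,E_{j(m)})\geq 0$, because the $E_{j(m)}^2$-contribution vanishes and all other contributions are non-negative by the sign pattern of $I$; combining with $(l'+x_m,E_{j(m)})>0$ would force $(s(l'),E_{j(m)})>0$, contradicting $s(l')\in\cS'$. Therefore $x_{m+1}=x_m+E_{j(m)}\leq s(l')-l'$, and since $s(l')-l'$ is a fixed effective integral cycle, the algorithm terminates after finitely many steps at some $x_k$ with $l'+x_k\in\cS'$ and $l'+x_k\leq s(l')$; minimality then forces $l'+x_k=s(l')$. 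The hard part of the proposition is the Chern-class identification in (\ref{Fact1}), which genuinely invokes the abelian cover machinery; once this is granted, (\ref{fact2}) is a lattice-theoretic argument driven entirely by the sign pattern of $I$ and the description of $\cS'$ as the $\setN$-span of the $E_j^*$.
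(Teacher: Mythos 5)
The paper does not prove Proposition~\ref{fact}: it is cited from \cite{Line} and \cite{OkumaRat}, so you are reconstructing a proof that the author simply outsources. Your part (\ref{fact2}) is correct and essentially the standard Laufer-type argument: the set $T(l')=\{x\in\cS':x\geq l',\,[x]=[l']\}$ is non-empty and closed under coordinatewise minimum (your computation of $(m,E_j)\leq 0$ using the sign pattern $(E_k,E_j)\geq 0$ for $k\neq j$ is exactly right, and $m-s_1\in L$ is forced since $s_1-s_2\in L$), so a unique minimum $s(l')$ exists; and the inductive claim $l'+x_m\leq s(l')$ holds because if the $j(m)$-coefficient of $s(l')-l'-x_m$ were zero, the off-diagonal non-negativity would give $(s(l')-l'-x_m,E_{j(m)})\geq 0$, forcing $(s(l'),E_{j(m)})>0$, a contradiction. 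Termination and identification with $s(l')$ then follow. This part is fine.

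Part (\ref{Fact1}) contains a genuine error. The multiplication maps $\cL_h\otimes\cL_{h'}\to\cL_{h+h'}$ coming from the $\cO_{\widetilde X}$-algebra structure on $\widetilde c_*\cO_{\widetilde Y}$ are \emph{not} isomorphisms: $\widetilde c$ is ramified along $E$, and these maps have cokernel supported on the branch locus. (Already for a double cover branched over a smooth divisor $B$, one has $\cL_h^{\otimes 2}\to\cO_{\widetilde X}$ with image $\cO(-B)$, so $2D_h\sim B\not\sim 0=D_0$.) Consequently $D_h+D_{h'}\sim D_{h+h'}$ is false in general, and your derivation of the group-section property collapses. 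The correct statement is that the multiplication map yields a nonzero global section of $\cO_{\widetilde X}(D_h+D_{h'}-D_{h+h'})$ which is an isomorphism away from $E$; its divisor is therefore an effective cycle supported on $E$ with Chern class $r_h+r_{h'}-r_{h+h'}\in L_{\geq 0}$, hence equals exactly $r_h+r_{h'}-r_{h+h'}$. That identity, namely
\begin{equation*}
D_h+D_{h'}\sim D_{h+h'}+\bigl(r_h+r_{h'}-r_{h+h'}\bigr),
\end{equation*}
is precisely what makes $d_1(l+r_h)=\cO_{\widetilde X}(l+D_h)$ a group homomorphism on $L'$; it is weaker than what you asserted, and it genuinely uses the branching contribution you elided. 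You should also be aware that the statement $c_1(\cO(D_h))=r_h$ itself requires an argument (it is where the integrality of $\widetilde c^*(l')$ and the analysis of the covering near $E$ enter), which you flagged as the hard step but did not supply; the paper handles this by citation.
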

Since for any $f\in \cO_{Y,o}$ that part of $div(f\circ \pi_Y)$
which is supported by the exceptional divisor of $\pi_Y$ is in the
Lipman cone of $\widetilde{Y}$, we get that $\cF(l')=\cF(s(l'))$.

On the other hand, for any $l'>0$ (in particular for any non-zero
$s(l')$), one has the exact sequence
\begin{equation}\label{exSeq}
0\to \cO_{\widetilde{Y}}(-\widetilde{c}^*(l'))\to
\cO_{\widetilde{Y}}\to \cO_{\widetilde{c}^*(l')}\to 0.
\end{equation}
If we consider (\ref{exSeq}) for some $l'=l+r_h>0$ ($l\in L$),
then by \cite[(3.2)]{OkumaRat} the corresponding
$\theta(h)$-eigenspaces constitute the exact sequence
\begin{equation}\label{exSeqphi}
0\to \cO_{\widetilde{X}}(-D_h-l)\to \cO_{\widetilde{X}}(-D_h)\to
\cO_{l}(-D_h)\to 0.
\end{equation}
In particular, we get
\begin{corollary}\label{4.cor.new} For any $l'=l+r_h>0$ one has:
$$\hh_h(l')=\dim\frac{H^0(\widetilde{X},\cO_{\widetilde{X}}(-D_h))}
{H^0(\widetilde{X},\cO_{\widetilde{X}}(-D_h-l))}.$$
\end{corollary}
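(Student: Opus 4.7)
The plan is to translate the definition of $\hh_h(l')$ into the sheaf-theoretic language on $\widetilde{X}$ via the finite map $\widetilde{c}:\widetilde{Y}\to\widetilde{X}$, and then read off the answer from the pieces already assembled in Proposition \ref{fact} and the eigenspace exact sequence (\ref{exSeqphi}).

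First, I would identify $\cF(l')$ with global sections of a line bundle on $\widetilde{Y}$. Writing $\cO_{Y,o}$ as $H^0(U,\cO_{\widetilde{Y}})$ on a suitable Stein neighbourhood $U$ of the exceptional set of $\pi_Y$, the divisor condition $\mathrm{div}(f\circ\pi_Y)\geq \widetilde{c}^*(l')$ becomes $f\in H^0(U,\cO_{\widetilde{Y}}(-\widetilde{c}^*(l')))$. Thus $\cF(l')=H^0(\cO_{\widetilde{Y}}(-\widetilde{c}^*(l')))$ and $\cO_{Y,o}=H^0(\cO_{\widetilde{Y}})$, and the $H$-action on the left matches the geometric $H$-action on the right.

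Next, I would push the short exact sequence (\ref{exSeq}) forward by $\widetilde{c}$. Since $\widetilde{c}$ is finite, $\widetilde{c}_*$ is exact and commutes with taking $H$-eigenspaces. By Proposition \ref{fact} the $\theta(h)$-eigenspace of $\widetilde{c}_*\cO_{\widetilde{Y}}$ is $\cO_{\widetilde{X}}(-D_h)$, and by \cite[(3.2)]{OkumaRat} (already quoted in the text as (\ref{exSeqphi})) the $\theta(h)$-eigenspace of the whole sequence is
\begin{equation*}
0\to \cO_{\widetilde{X}}(-D_h-l)\to \cO_{\widetilde{X}}(-D_h)\to \cO_l(-D_h)\to 0.
\end{equation*}
In particular the $\theta(h)$-eigenspace of $\widetilde{c}_*\cO_{\widetilde{Y}}(-\widetilde{c}^*(l'))$ is precisely $\cO_{\widetilde{X}}(-D_h-l)$ (here the hypothesis $l'=l+r_h>0$ is exactly what is needed to apply (\ref{exSeq}) and (\ref{exSeqphi})).

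Finally, applying $H^0(\widetilde{X},-)$ and using once more that $\widetilde{c}_*$ preserves global sections (finiteness) and that the eigenspace decomposition is exact, I obtain
\begin{equation*}
\cO_{Y,o}^{\theta(h)}=H^0(\widetilde{X},\cO_{\widetilde{X}}(-D_h)),\qquad \cF(l')^{\theta(h)}=H^0(\widetilde{X},\cO_{\widetilde{X}}(-D_h-l)).
\end{equation*}
Since taking the $\theta(h)$-eigenspace is an exact functor on $\setC[H]$-modules, it commutes with the quotient, so $(\cO_{Y,o}/\cF(l'))_{\theta(h)}$ is the quotient of these two spaces, which is the required formula. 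The only mildly delicate ingredient is the identification of the $\theta(h)$-component of the subsheaf $\widetilde{c}_*\cO_{\widetilde{Y}}(-\widetilde{c}^*(l'))$ with $\cO_{\widetilde{X}}(-D_h-l)$; this is exactly the content of (\ref{exSeqphi}) and is the main technical input one has to borrow.
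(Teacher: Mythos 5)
Your proof is correct and is essentially the argument the paper leaves implicit (the paper states the corollary immediately after (\ref{exSeqphi}) with no further elaboration). You have simply filled in the two standard details the paper assumes the reader supplies: that $\cO_{Y,o}$ and $\cF(l')$ are global sections on a Stein representative of $\widetilde{Y}$, and that pushing forward by the finite map $\widetilde{c}$, taking $H^0$, and passing to $\theta(h)$-eigenspaces all commute with the exact sequence (\ref{exSeq}), which together with Proposition~\ref{fact}\,(\ref{Fact1}) and (\ref{exSeqphi}) gives exactly the stated quotient.
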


\begin{example}\label{ex: h0}
Since $D_0=0$, for  $h=0$ we obtain: 
\begin{equation*}
H_0(\bt)=\sum_{l\in L}\dim \frac{\cO_{X,o}}{\{f\in \cO_{X,o}:
div(f\circ \pi) \geq l\}}\ \bt^l.
\end{equation*}
This is the Poincar\'e (Hilbert) series of $\cO_{X,o}$ associated
with the divisorial filtration of $\pi$. 
For some of its qualitative   properties
and interesting examples, see e.g. \cite{CHR}.
\end{example}

Therefore,  for any $l'=l+r_h>0$, from (\ref{exSeqphi}),  one has:
\begin{equation}\label{EXS0}
\hh_h(l')=\chi(\cO_l(-D_h))+ h^1(-D_h)-h^1(-D_h-l),
\end{equation}
where $h^1(D)$ denotes the dimension of
$H^1(\cO_{\widetilde{X}}(D))$. It is also convenient to write
$h^1(l')$ for $h^1(d_1(l'))$, cf. (\ref{fact})(\ref{Fact1}), e.g.
$h^1(-D_h-l)=h^1(-l')$.
Moreover,  $\chi(\cO_l(-D_h))=\chi(l')-\chi(r_h)$ for any
$l'=l+r_h$. Hence, (\ref{EXS0}) reads as 
\begin{equation}\label{EXS0b}
\hh_h(l')+h^1(-l')=\chi(l')-\chi(r_h)+ h^1(-r_h).
\end{equation}
Notice that the right hand side 
is a quadratic function in $l$.
Since  $\hh_h(l')=\hh_h(s(l'))$ we also get 
\begin{equation}\label{EXS}
\hh_h(l')=
\chi(s(l')) - \chi(r_h) -h^1(-s(l'))+ h^1(-r_h).
\end{equation}
Hence, by (\ref{eq:4}), 
\begin{equation}\label{pbar}
P(\bt)=\sum_{l'}\sum_I(-1)^{|I|+1}\, \Big(\,\chi(s(l'+E_I))
-h^1(-s(l'+E_I))\,\Big) \, \bt^{l'}.
\end{equation}

Assume that $l'\not\in\cS'$. Then there exists $j_0\in \cV$ with
$(l',E_{j_0})>0$. Therefore, for any subset $I\subset \cV\setminus
\{j_0\}$ the Laufer algorithm applied for $l'+E_I$ may start
adding $E_{j_0}$, hence $s(l'+E_I)=s(l'+E_{I\cup j_0})$. In
particular, these two terms cancel each other in the sum of the
above expression of $P$. In particular, the first sum in
(\ref{pbar}), in fact, runs over $l'\in \cS'$.

Moreover, analysing the computation sequence $\{x_i\}_i$ connecting
$l'+E_I$ with $s(l'+E_I)$, at each step one has
\begin{equation*}
\chi(x_m)-h^1(-x_m)=\chi(x_{m+1})-h^1(-x_{m+1}).
\end{equation*}
Since $l'+E_{J(l',I)}$ is one of the elements of this sequence, we get:
\begin{corollary}\label{pbarcor}
\begin{align*}
P(\bt)&=\sum_{l'\in \cS'}\sum_I(-1)^{|I|+1}\, \Big(\,\chi(s(l'+E_I))
-h^1(-s(l'+E_I))\,\Big) \, \bt^{l'}\\
\ &=\sum_{l'\in \cS'}\sum_I(-1)^{|I|+1}\, \Big(\,\chi(l'+E_{J(l',I)})
-h^1(-l'+E_{J(l',I)})\,\Big) \, \bt^{l'}\\
\ &=\sum_{l'\in \cS'}\sum_I(-1)^{|I|+1}\, \Big(\,\chi(l'+E_I)
-h^1(-l'+E_I)\,\Big) \, \bt^{l'}.
\end{align*}
\end{corollary}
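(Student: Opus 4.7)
The plan is to start from the identity (\ref{pbar}) already derived and establish the three equivalent forms of $P(\bt)$ successively. The first equality — the restriction of the outer sum to $l'\in\cS'$ — is essentially the argument already sketched above the statement: when $l'\notin\cS'$, pick $j_0\in\cV$ with $(l',E_{j_0})>0$; for every $I\subseteq\cV\setminus\{j_0\}$, since off-diagonal intersections are non-negative one still has $(l'+E_I,E_{j_0})>0$, so Laufer's algorithm applied to $l'+E_I$ may begin by adding $E_{j_0}$, giving $s(l'+E_I)=s(l'+E_{I\cup\{j_0\}})$. Pairing $I$ with $I\cup\{j_0\}$ annihilates these contributions with opposite signs, leaving only $l'\in\cS'$ in the sum.

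The decisive ingredient for the remaining two equalities is an invariance lemma: for any integral cycle $y>0$ and any $E_j$ with $(y,E_j)>0$,
$$\chi(y+E_j)-h^1(-(y+E_j))=\chi(y)-h^1(-y).$$
To prove this, restrict the structure sequence of $E_j\subset\widetilde{X}$ to obtain
$$0\to\cO_{\widetilde{X}}(-y-E_j)\to\cO_{\widetilde{X}}(-y)\to\cO_{E_j}(-(y,E_j))\to 0.$$
Since $E_j\cong\setP^1$ and the quotient has degree $-(y,E_j)<0$, one has $h^0(\cO_{E_j}(-(y,E_j)))=0$ and $h^1(\cO_{E_j}(-(y,E_j)))=(y,E_j)-1$ by Riemann--Roch on $\setP^1$. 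Taking the long exact sequence and using the vanishing $H^2(\widetilde{X},\cF)=0$ for coherent $\cF$ (because $\pi$ has one-dimensional fibers, so $R^2\pi_*=0$ over the Stein base), one deduces $h^1(-(y+E_j))-h^1(-y)=1-(y,E_j)$. Together with $\chi(y+E_j)-\chi(y)=1-(y,E_j)$ (from (\ref{chiadd}) and $\chi(E_j)=1$), this gives the invariance.

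Applying the invariance repeatedly to any Laufer-type computation sequence $y_0,y_1,\ldots,y_n$ with $y_{m+1}=y_m+E_{j(m)}$ and $(y_m,E_{j(m)})>0$, the quantity $\chi(y)-h^1(-y)$ is constant throughout. By Definition/Lemma \ref{DL}, the DL sequence from $I$ to $J(l',I)$ is precisely such a path from $l'+E_I$ to $l'+E_{J(l',I)}$, and it extends, via the standard Laufer algorithm of Proposition \ref{fact}(\ref{fact2}), to $s(l'+E_I)$. Therefore, for all $l'\in\cS'$ and $I\subseteq\cV$,
$$\chi(l'+E_I)-h^1(-(l'+E_I))=\chi(l'+E_{J(l',I)})-h^1(-(l'+E_{J(l',I)}))=\chi(s(l'+E_I))-h^1(-s(l'+E_I)),$$
which yields both the second and the third forms for $P(\bt)$. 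The main obstacle is the invariance lemma itself: the bookkeeping requires care with the two conventions for $\chi$ (the paper's $\chi(l'):=-(l',l'+K)/2$ versus the sheaf Euler characteristic of $\cO(-l')$) and with the $H^2$-vanishing needed to close the long exact sequence; once these are in hand, everything else is combinatorial cancellation within the Laufer/DL algorithms.
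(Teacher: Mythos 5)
Your proof is correct and follows essentially the same route as the paper: the cancellation argument restricts the outer sum to $\cS'$, and the equality of the three forms is obtained by observing that $\chi(y)-h^1(-y)$ is constant along any Laufer-type computation sequence (the DL sequence from $l'+E_I$ to $l'+E_\bI$ being a prefix of the Laufer sequence ending at $s(l'+E_I)$). The only difference is that you spell out the invariance step via the short exact sequence for $E_j\cong\setP^1$ and the $H^2$-vanishing, a standard fact the paper states without proof; just note that since $l'\in L'$ is only a rational cycle, the relevant line bundles must be read via the section $d_1$, i.e.\ $h^1(-l')$ means $h^1(d_1(-l'))$ in the paper's convention, which the short exact sequence argument accommodates without change.
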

\noindent Via (\ref{EQ:100}), the second identity reads as
\begin{corollary}
\begin{equation*}
P(\bt)=Z(\bt)+
\sum_{l'\in \cS'}\sum_I(-1)^{|I|}\, 
h^1(-l'+E_{J(l',I)})\, \bt^{l'}.
\end{equation*}
I.e., $P(\bt)=Z(\bt)$ if and only if 
$\sum_I(-1)^{|I|} h^1(-l'+E_{J(l',I)})=0$ for all $l'\in \cS'$.
\end{corollary}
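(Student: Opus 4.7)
The statement is essentially a direct consequence of Corollary~\ref{pbarcor} combined with Theorem~\ref{PROPTOP}, so my plan is to make this combination explicit, take care with the signs, and then justify the coefficient-wise reformulation.

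First I would take the middle expression from Corollary~\ref{pbarcor},
\[
P(\bt)=\sum_{l'\in \cS'}\sum_I(-1)^{|I|+1}\,\Big(\chi(l'+E_{J(l',I)})-h^1(-l'+E_{J(l',I)})\Big)\,\bt^{l'},
\]
and split the inner bracket into its two summands. The $\chi$-part produces
\[
\sum_{l'\in\cS'}\sum_I(-1)^{|I|+1}\chi(l'+E_{J(l',I)})\,\bt^{l'},
\]
which by Theorem~\ref{PROPTOP} (identity (\ref{EQ:100})) equals $Z(\bt)$. The $h^1$-part carries an extra minus sign from the bracket, so it contributes $+\sum_{l'\in\cS'}\sum_I(-1)^{|I|}h^1(-l'+E_{J(l',I)})\bt^{l'}$ to $P(\bt)$. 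Adding the two pieces yields the desired formula.

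For the \emph{if and only if} reformulation, I would observe that $P(\bt),Z(\bt)$ and the correction term all lie in $\setZ[[\bt^{L'}]]$ and, more precisely, are all supported on $\cS'$ (for $P$ this was noted before Lemma~\ref{lem:new}; for $Z$ it is immediate from (\ref{zht}); for the correction term each summand is placed in the monomial $\bt^{l'}$ with $l'\in\cS'$ by construction). Since the monomials $\{\bt^{l'}\}_{l'\in L'}$ are linearly independent in $\setZ[[\bt^{L'}]]$, equality of formal series is equivalent to equality of coefficients, and equality with $Z(\bt)$ amounts to the vanishing of $\sum_I(-1)^{|I|}h^1(-l'+E_{J(l',I)})$ for every $l'\in\cS'$.

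There is really no obstacle here: the content has already been absorbed into Corollary~\ref{pbarcor} and Theorem~\ref{PROPTOP}, and the only thing to watch is the bookkeeping of the sign $(-1)^{|I|+1}$ appearing in Corollary~\ref{pbarcor} versus the $(-1)^{|I|}$ used in the statement. The substantive step — replacing $h^1(-s(l'+E_I))$ by $h^1(-l'+E_{J(l',I)})$ along a Laufer-type computation sequence, and the cancellation trick that restricts the outer sum to $\cS'$ — has already been carried out in the derivation of Corollary~\ref{pbarcor}, so nothing new needs to be proved.
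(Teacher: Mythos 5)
Your proposal is correct and follows the same route as the paper: the paper derives the corollary simply by noting that, via identity (\ref{EQ:100}), the $\chi$-part of the second expression in Corollary~\ref{pbarcor} equals $Z(\bt)$, and the $h^1$-part becomes the stated correction term after absorbing the sign. Your write-up merely spells out the sign bookkeeping and the coefficient-wise reformulation, both of which are routine and handled correctly.
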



\begin{remark}\label{rem.new}
By (\ref{eq:4})
$$\pp(l')=\sum_I(-1)^{|I|+1}\dim\frac{H^0(\cO(-r_h))}
{H^0(\cO(-l'-E_I))}=
\sum_I(-1)^{|I|+1}\dim\frac{H^0(\cO(-l'))}
{H^0(\cO(-l'-E_I))}.$$
Using the very last expression for $\pp(l')$, one can show 
that for any $l\in L_{\geq 0}$, one has (cf. \cite{BN_splice1}):
$$\dim\frac{H^0(\widetilde{X},\cO(-l'))}
{H^0(\widetilde{X},\cO(-l'-l))}=\sum_{a\in L_{\geq 0},\, a\not\geq l}
\pp(l'+a).$$
The proof uses induction over $l$, one compares the above expressions
for $l$ and $l+E_i$ (for some $i$). Then, taking $l'=r_h$, one gets
the proof of (\ref{lem:new}).
\end{remark}

\section{The Main Identity.}\label{mi}

\subsection{} In the next paragraph, we formulate an identity
which connects the topological invariant $Z$ with the analytic
invariant $P$, which is conjecturally valid for `some'   normal
surface singularities.
\begin{definition}\label{MI} Consider a normal surface singularity $(X,o)$
with rational homology sphere link. We say that $(X,o)$ and its
resolution $\pi$ satisfy
 the {\bf Main Identity} if
\begin{equation}\label{MIF}
Z(\bt)= P(\bt).
\end{equation}
\end{definition}

\begin{example}\label{segy}
 Consider the cyclic quotient singularity whose minimal
resolution $\pi$ has only one irreducible component $E$ with
self-intersection $-p$ ($p\geq 2$). Then $H=\setZ_p$, $Y=\setC^2$,
$\cO_{Y,o}=\setC\{z_1,z_2\}$. The action of $H$ on $\setC\{z\}$ is
given by $h*z_i=\theta(h)(E^*)z_i$. Fix the generator $g:=[-E^*]$
of $H$ and set $\xi:=e^{2\pi i/p}$. Then the action of $g$ is
given by $g*z_i=\xi z_i$. Fix $\phi\in \widehat{H}$ with
$\phi(g)=\xi^q$ for some $0\leq q<p$. Then $\phi=\theta(h)$ with
$h:=[qE^*]$. Moreover $r_h=qE^*=(q/p)E$.

$\widetilde{Y}$ is just the blow up of $Y$ in one point with
exceptional divisor $\widetilde{E}$ a $(-1)$-curve. Since
$\widetilde{c}^*(E)=p\widetilde{E}$, we get that $\cF(l')$
consists of all the monomials with degree $\geq pl'$. Therefore,
for $l'=l+q/p$ ($l\in L$), $(\cF(l')/\cF(l'+E))_\phi$ can be
identified with the vector space of monomials of degree $pl+q$.
Hence
$$P_h(t)=\sum_{l\geq 0}(1+q+pl)t^{l+\frac{q}{p}}, \ \mbox{and} \
P(t)=\sum_{k\geq 0}(1+k)t^{k/p}.$$ On the other hand, $Z(t)$
equals with the same sum by its very definition.
\end{example}

\begin{remark}
Notice that if for some analytic structure and resolution 
$Z=P$ holds, then, 
by (\ref{lem:new}), the Hilbert function $H$ can be recovered from 
the resolution graph of $\pi$.
\end{remark}

The Main Identity for the component $h=0$ (cf. \ref{hcompa}) was
proved in \cite{CDG} for any rational singularity and any
resolution, see also \cite{CDGEq} for an equivariant version,
valid for rational singularities, formulated in a different way. 
Here we prove it in the following situations:

\begin{theorem}\label{MTH} The Main Identity is true 
in the following cases:
\begin{enumerate}
\item\label{rat}
$(X,o)$ is rational, and  $\pi$ is arbitrary resolution,
\item\label{me}
$(X,o)$ is minimally elliptic singularity whose minimal resolution
is good, and $\pi$ is this minimal resolution,
\end{enumerate}
\end{theorem}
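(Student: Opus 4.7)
The starting point is the Corollary immediately following \ref{pbarcor}, which reduces the Main Identity $P(\bt)=Z(\bt)$ to the vanishing
\[
(V)\qquad \sum_{I\subseteq \cV} (-1)^{|I|}\, h^1\bigl(-l'+E_{J(l',I)}\bigr)=0 \qquad \text{for every }l'\in \cS'.
\]
I would prove (V) by two different mechanisms: in case (\ref{rat}), every summand vanishes individually, whereas in case (\ref{me}), only the alternating sum is zero.

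For the rational case, the key input is R\"ohr's vanishing theorem (the Laufer/Grauert--Riemenschneider type result quoted in the introduction). Applied to a rational singularity with rational homology sphere link, it yields $h^1\bigl(\cO_{\widetilde X}(d_1(-l'+E_{J(l',I)}))\bigr)=0$ for every $l'\in \cS'$ and every $I\subseteq\cV$: the hypothesis $l'\in\cS'$ gives $(l',E_j)\le 0$ for all $j$, and the defining intersection properties of $J(l',I)$ in Definition/Lemma \ref{DL} supply the remaining positivity needed to place $-l'+E_{J(l',I)}$ in the vanishing range. An equivalent route, which I find conceptually cleaner, is to pass to the universal abelian cover $(Y,o)$ (rational by Okuma), use the standard rationality vanishing $h^1(\cO_{\widetilde Y}(-Z))=0$ for effective exceptional $Z$ on $\widetilde Y$, and then read off the termwise vanishing on $\widetilde X$ from the $H$-eigenspace decomposition in \ref{fact}(\ref{Fact1}).

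For the minimally elliptic case the individual terms in (V) are no longer zero. I would use: (i) the Gorenstein property together with Serre/Grauert--Riemenschneider duality on $\widetilde X$, which forces the relevant $h^1$-values to lie in $\{0,1\}$; (ii) Laufer's theory of the minimally elliptic cycle $C\in L$, which identifies the exact combinatorial locus where the value is $1$ (roughly, those classes whose negatives are ``bounded above'' by $-C$ in the resolution-graph sense); and (iii) the hypothesis that $\pi$ is the \emph{minimal} good resolution, which rules out $(-1)$-curves of valency $\le 2$. The plan is then to pair the subsets $I\subseteq\cV$ as $\{I,\,I\triangle\{j(l')\}\}$ for a carefully chosen pivot vertex $j(l')$ so that both members of each pair contribute the same $h^1$-value, and (V) follows by cancellation.

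The main obstacle is ingredient (iii) of the minimally elliptic case: pinning down the precise locus of $(l',I)$ on which $h^1=1$, and exhibiting the pivot $j(l')$ that realizes the pairing. Minimality of the resolution is essential, because a superfluous $(-1)$-curve of low valency would create an unpaired singleton in the Boolean lattice of subsets and hence a nonzero contribution to (V); this is presumably the mechanism behind the pathologies for non-minimal resolutions flagged in the introduction.
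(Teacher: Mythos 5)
Your reduction to the vanishing
\[
(V)\qquad \sum_{I\subseteq\cV}(-1)^{|I|}\,h^1\bigl(-(l'+E_{J(l',I)})\bigr)=0\qquad(l'\in\cS')
\]
is correct and is indeed one of the reformulations the paper records, but both branches of your argument have real gaps, and the route you chose is not the one the paper actually follows.

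For the rational case, note that the R\"ohr-type statement the paper invokes gives $h^1(-s(l'))=0$ for cycles of the form $s(l')$, i.e.\ for elements of the Lipman cone $\cS'$. The cycles $l'+E_{J(l',I)}$ are \emph{not} in $\cS'$ in general: the defining property of $J=J(l',I)$ in \ref{DL} controls $(E_j,l'+E_J)$ only for $j\notin J$, while for $j\in J$ one can perfectly well have $(E_j,l'+E_J)>0$. So placing $-(l'+E_J)$ ``in the vanishing range'' of R\"ohr's theorem does not follow from $l'\in\cS'$ as you claim. Termwise vanishing does hold in the rational case, but the paper \emph{derives} it as an output of the combinatorial Lemma~\ref{L:1} (together with the constancy of $\chi-h^1$ along a computation sequence), not as an input from a vanishing theorem. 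If you want to get it directly you need a strictly stronger statement than the one the paper cites (vanishing of $h^1(d_1(-D))$ for effective $D\in L'_e$, not merely $D\in\cS'$), and you would have to supply a proof of it; the detour through the universal abelian cover does not by itself upgrade the cone of applicability.

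For the minimally elliptic case, the pairing idea is not carried out and, as sketched, cannot be made to work. The natural way to make it precise — pair $I\leftrightarrow I\triangle\{j\}$ for a pivot $j$ lying in every $J(l',I)$, so that the pivot leaves $J$ unchanged — fails at the start: since $l'\in\cS'$ one has $J(l',\emptyset)=\emptyset$, so there is no vertex common to all the sets $J(l',I)$. Identifying the locus where $h^1=1$ and exhibiting an actual pivot are exactly the hard points, and your proposal defers precisely these; the paper avoids the termwise analysis altogether. What the paper does instead is (i) apply R\"ohr's vanishing at the level of $s(l'+E_I)$, isolating a single $p_g\bt^0$ correction which is then absorbed by the $l'=0$ term, and (ii) prove Lemma~\ref{L:1}, the genuinely nontrivial combinatorial statement $\chi(s(l'+E_I))=\chi(l'+E_{J(l',I)})$, by a careful analysis of Laufer's computation sequence (using, for minimally elliptic, the fact from \ref{egyketto} that all increments have $(l'+x_m,E_{j(m)})=1$ except for exactly one step of value $2$, whose position can be prescribed); (iii) the identity then follows from Theorem~\ref{PROPTOP}. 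You would need to supply an argument at the level of Lemma~\ref{L:1} or its $h^1$-shadow; the Boolean-lattice pairing as stated does not substitute for it.
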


The present proof emphasizes the importance of some vanishing
theorems valid for these singularities, and also the numerical
behavior of the `computation sequences' associated with the Artin
cycles.

\vspace{2mm}

\noindent \emph{Proof of (\ref{MTH}).} Let $p_g$ be the
geometric genus of $(X,o)$. Then $(X,o)$ is rational iff $p_g=0$,
and $(X,o)$ is minimally elliptic iff $p_g=1$ and $(X,o)$ is
Gorenstein \cite{Laufer77}. We write $Z_{min}\in L$ for the Artin
cycle,
 the unique minimal element of $\cS\setminus {0}$.

By \cite[1.7]{rohr}, one gets $h^1(-s(l'))=0$ except when $(X,o)$
is minimally elliptic and  $s(l')=0$ (in the rational case this
vanishing was proved by several authors, see e.g.,
\cite[12.1]{li}, \cite[4.1.1]{Line}). On the other hand, $s(l')=0$
if and only if $l'\in -L_e$.  Therefore, the second term in
(\ref{pbar}) is
\begin{equation*}
\Big(\, \sum_{l'}h^1(-s(l'))\bt^{l'}\, \Big) \cdot \prod_j
(1-t_j^{-1})= \Big(\, \sum_{l\in -L_e} p_g\bt^l \, \Big) \cdot
\prod_j (1-t_j^{-1})=p_g\bt^0.
\end{equation*}
Hence, (\ref{pbar}) (or (\ref{pbarcor})) reads as
\begin{equation}\label{pbar2}
P(\bt)=p_g\bt^0+\sum_{l'\in\cS'}
\sum_I(-1)^{|I|+1}\chi(s(l'+E_I))\cdot \bt^{l'}.\end{equation}

\begin{remark} Notice that (\ref{pbar2}) (as a consequence of
the above vanishing result) already shows that $P(\bt)$ is topological
(i.e. it can be determined from $\Gamma$).

In general, the above vanishing is not valid (even for `very
large' $l$). E.g., for Gorenstein elliptic singularities with
$p_g>1$, $h^1(\cO_{\widetilde{X}}(-kZ_{min}))=p_g-1>0$ for any
$k>0$ \cite[2.21]{Ninv}.

Moreover, in general, $P$ is not topological, it might depend on
the choice of the analytic structure (supported by a fixed
topological type), see e.g. (\ref{me2}).
\end{remark}
In the sum (\ref{pbar2}), let us separate $l'=0$. In this case,
since $s(E_I)=0$ for $I=\emptyset$, and $=Z_{min}$ otherwise, we
get  $\sum_I(-1)^{|I|+1}\chi(s(E_I))=\chi(Z_{min})=1-p_g$. In
particular
\begin{equation}\label{pbar3}
P(\bt)=\bt^0+\sum_{l'\in\cS'\setminus 0}
\sum_I(-1)^{|I|+1}\chi(s(l'+E_I))\cdot \bt^{l'}.\end{equation}

Now we focus on the sum in the right-hand side of this identity.
If we run the Laufer algorithm (\ref{fact})(\ref{fact2}) for
$l'+E_I$, then for any $m$ we have
\begin{equation}\label{alg}
\chi(l'+x_{m+1})=\chi(l'+x_{m})+1-(l'+x_m,E_{j(m)}).
\end{equation}

\begin{remark}\label{egyketto}
Take $h=0$. If in the Laufer  algorithm (\ref{fact})(\ref{fact2})
one starts with $l=E$, one gets for $s(l)$ the Artin cycle
$Z_{min}$ \cite{Laufer72}. During these steps, in the rational
case one has all the time $(E+x_m,E_{j(m)})=1$ \cite{Laufer72}. On
the other hand, in the minimally elliptic case one has all the
time $(E+x_m,E_{j(m)})=1$ excepting in the very last step, when
$(E+x_m,E_{j(m)})=2$; moreover, for any prescribed $E_{i^*}$ one
may arrange the computation sequence in such a way, that in this
very last step $j(k-1)=i^*$, cf.  \cite{Laufer77}.
\end{remark}

\begin{lemma}\label{L:1} Assume that $(X,o)$ and $\pi$ is as in
(\ref{MTH}). The for any $l'\in\cS'\setminus 0$ and $I\subset \cV$
one has
$$\chi(s(l'+E_I))=\chi(l'+E_{\bI}).$$
\end{lemma}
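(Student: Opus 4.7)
The plan is to reduce the identity in two steps: a combinatorial identification $s(l'+E_I)=s(l'+E_{\bI})$, based on the fact that the $\bI$-algorithm of (\ref{DL})(\ref{DL2}) is simultaneously a (partial) Laufer computation sequence; and an analytic $\chi$-preservation step $\chi(s(l'+E_{\bI}))=\chi(l'+E_{\bI})$, which comes down to the vanishing theorems available in the rational and minimally elliptic settings.

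For the first step, I would observe that the algorithm producing $\bI=J(l',I)$ constructs a chain $I=I_0\subsetneq I_1\subsetneq\cdots\subsetneq I_K=\bI$, at each stage choosing $j(m)\in\cV\setminus I_m$ with $(E_{j(m)},\,l'+E_{I_m})>0$. This is precisely the data needed to view the cycle sequence $l'+E_I,\,l'+E_{I_1},\ldots,l'+E_{\bI}$ as a valid partial Laufer computation sequence in the sense of (\ref{fact})(\ref{fact2}) starting from $l'+E_I$ (with $x_m:=E_{I_m\setminus I}$). Running the Laufer algorithm with these choices, it reaches $l'+E_{\bI}$, and any further continuation (should $l'+E_{\bI}\notin\cS'$) can only add $E_j$ with $j\in\bI$, by the stopping condition (\ref{star}). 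Hence $s(l'+E_I)\geq l'+E_{\bI}$; since $s(l'+E_I)\in\cS'$ and $[s(l'+E_I)]=[l'+E_{\bI}]$ in $H$, this forces $s(l'+E_I)\geq s(l'+E_{\bI})$. The reverse inequality $s(l'+E_I)\leq s(l'+E_{\bI})$ is immediate, as any candidate in the minimization defining $s(l'+E_{\bI})$ is also a candidate for $s(l'+E_I)$. Together, $s(l'+E_I)=s(l'+E_{\bI})$.

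For the second step, comparing (\ref{EXS0b}) and (\ref{EXS}) (which relied on $\hh_h(l')=\hh_h(s(l'))$) yields the identity $\chi(Z)-h^1(-Z)=\chi(s(Z))-h^1(-s(Z))$ valid for any $Z>0$. Applied to $Z=l'+E_{\bI}>0$ (note $l'\in\cS'\setminus 0$ forces $l'>0$, cf.\ the discussion of (\ref{cones})), the lemma reduces to $h^1(-(l'+E_{\bI}))=h^1(-s(l'+E_{\bI}))$. In the rational case the classical Laufer vanishing $h^1(\cO_{\widetilde{X}}(-W))=0$ for all $W>0$ (a consequence of $p_g=0$ together with Artin's characterization $H^1(\cO_W)=0$, via the long exact sequence of $0\to\cO_{\widetilde{X}}(-W)\to\cO_{\widetilde{X}}\to\cO_W\to 0$) makes both sides vanish. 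In the minimally elliptic case with minimal good resolution, R\"ohr's theorem \cite[1.7]{rohr} gives $h^1(-s(l'+E_{\bI}))=0$ (since $s(l'+E_{\bI})>0$), and the main obstacle becomes the complementary vanishing $h^1(-(l'+E_{\bI}))=0$; I would address this by a careful inspection of the Laufer continuation from $l'+E_{\bI}$ to $s(l'+E_{\bI})$, using the minimality of $\pi$ and the structure identified in Remark \ref{egyketto} to exclude the single possible \emph{elliptic} $\chi$-jump where $(y_m,E_{j(m)})=2$.
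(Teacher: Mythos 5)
Your Step 1 (the identification $s(l'+E_I)=s(l'+E_{\bI})$) is correct, and your Step 2 — using the invariance of $\chi(\cdot)-h^1(-\cdot)$ along a Laufer computation sequence to reduce the lemma to $h^1(-(l'+E_{\bI}))=h^1(-s(l'+E_{\bI}))$ — is a clean and legitimate reformulation (it is in fact the infinitesimal version of the identity that produces Corollary~\ref{pbarcor}). The issue is that this reformulation does not discharge the real difficulty; it relocates it, and you then do not carry it out.

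In the rational case your justification is incomplete. The long exact sequence of $0\to\cO_{\widetilde{X}}(-W)\to\cO_{\widetilde{X}}\to\cO_W\to 0$ together with $p_g=0$ and $h^1(\cO_W)=0$ gives
\[
H^1\bigl(\cO_{\widetilde{X}}(-W)\bigr)\;\cong\;\operatorname{coker}\bigl(H^0(\cO_{\widetilde{X}})\to H^0(\cO_W)\bigr),
\]
and you have not shown this cokernel vanishes; that surjectivity is not a formal consequence of the two vanishings you cite, it is exactly the nontrivial content. (Moreover here $l'+E_{\bI}\in L'$ may have nontrivial class in $H$, so the relevant sheaf is $d_1(-(l'+E_{\bI}))=\cO_{\widetilde{X}}(-D_h-l)$, not $\cO_{\widetilde{X}}(-W)$ for $W\in L$, and one needs the vanishing on the universal abelian cover rather than the plain rational-singularity statement.) One \emph{can} salvage the rational case, but not by the argument you give.

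In the minimally elliptic case the proposal is openly unfinished: you say you ``would address this by a careful inspection of the Laufer continuation\ldots to exclude the single possible elliptic $\chi$-jump where $(y_m,E_{j(m)})=2$.'' This is where the lemma actually lives, and it is precisely what the paper's proof does, so you have not produced an independent proof. Worse, the stated plan mischaracterizes the mechanism: the elliptic jump cannot in general be \emph{excluded}. The paper's argument is that the Laufer continuation from $l'+E_{\bI}$ may be \emph{rerouted} so that the one step with intersection $2$ adds an $E_j$ with $(l',E_j)<0$ (using the flexibility guaranteed by Remark~\ref{egyketto} and \cite{Laufer77}); this choice makes $(z-E_j,E_j)=1$ after all, so the $\chi$-value does not change even though a priori a jump ``should'' have occurred. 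In addition, the case $\bI\ne\cV$ requires a separate argument (that $\bI$ then supports a rational configuration, plus a delicate analysis of whether $z=s(l'+E_I)$ or one extra step is needed), none of which appears in your sketch. In short: your reformulation is a nice alternative framing, but the combinatorial heart of the proof — the careful control of $(y_m,E_{j(m)})$ along the Artin-cycle computation and the rerouting trick in the elliptic case — is missing, so the proposal as it stands has a genuine gap.
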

\begin{proof} Write $l'=l+r_h$.
In the Laufer algorithm of $l'+E_I$ we may start the sequence with
cycles of type $l'+E_{I_m}$, where $I_{m+1}=I_m\cup \{i_m\}$ and
$i_m\not\in I_m $ (i.e. with the computation sequence of
$J(l',I)$, cf. (\ref{DL})(\ref{DL2})), till we hit $l'+E_{\bI}$.
Then, as a second turn, we continue to add cycles $E_j$ with $j\in
\bI$ till we get the smallest cycle $z\in L'$ with $z\geq
l'+E_{\bI}$ and $(z,E_j)\leq 0$ for all $j\in \bI$. Let $Z^*$ be
the (sum of the) Artin cycles of (the connected components of)
$\bI$. Clearly $z\leq l'+Z^*$. Moreover, the steps in this second
turn may be considered as a part of the computation of the Artin
cycle $Z^*$.

Assume $\bI=\cV$. Then $z=s(l'+E_I)$. Moreover
 $\chi(z)=\chi(l'+E_{\bI})$.
 This follows from  \eqref{alg} and \eqref{egyketto}:
in the second turn we have all the time $(l'+x_m,E_{j(m)})=1$
except if the graph is elliptic, $z=l'+Z_{min}$, and we are in the
very last step. In this case,  we may assume that the last added
cycle $E_j$ is one prescribed with $(l',E_j)<0$, hence
$(z-E_j,E_j)=1$ too.

Assume that $\bI\not=\cV$. Then, by \cite{Laufer77}, $\bI$
supports a rational singularity, hence by \eqref{alg} and
\eqref{egyketto},   $\chi(z)=\chi(l'+E_{\bI})$. If $z=s(l'+E_I)$
then we are done. Otherwise, there exists an index $j\in
\cV\setminus \bI$ with $(z,E_j)>0$ ($*$). Notice that
automatically $(E_{\bI},E_j)>0$ (otherwise we would have
$(z,E_j)=(l',E_j)\leq 0$, a contradiction). By the definition of
$\bI$, we have $(l'+E_{\bI},E_j)\leq 0$, hence $(l',E_j)\leq -
(E_{\bI},E_j)<0$. This together with ($*$) implies $(z-l',E_j)\geq
2$. Since the computation sequence $\{x_m\}_m$ can be considered
as the beginning of the computation sequence of $Z_{min}$, this
can happen only if the graph is elliptic, $(z-l',E_j)=2$,
$z+E_j=Z_{min}+r_h=s(l'+E_I)$ and $z=l'+Z^*$. But in this case one
also has $0<(z,E_j)=2+(l',E_j)<2$, hence $(z,E_j)=1$ which imply
$\chi(z)=\chi(z+E_j)$.
\end{proof}
Finally, (\ref{MTH}) follows from (\ref{PROPTOP}), (\ref{pbar3})
and  (\ref{L:1}) (and the fact that the coefficient of $\bt^0$ in
$Z(\bt)$ is 1). \hfill$\square$

\section{The Reduced Identities}\label{sec:RI}

\subsection{} For any fixed resolution $\pi$,
in the main identities (\ref{MI}) one takes a variable $t_j$ for
each exceptional divisor $E_j$ of $\pi$. Since, in general, $\pi $
is not minimal, (and the set of all exceptional divisors of an
arbitrary resolution supports no intrinsic geometric meaning), it
is natural to create the possibility to reduce the number of
variables, or replace $L$ and $L'$ by some more intrinsic
(geometrically defined)  lattices. E.g., one might take only the
vertices corresponding to the nodes of the graph, or corresponding
to the essential exceptional divisors only, or pull-backs/images of lattices
provided by some geometric/universal constructions.

Here, we make explicit that  situation when one 
reduces the number of variables. To start with, we define the
corresponding topological and analytical series with reduces
number of variables, then we formulate the corresponding `Reduced
Identities', and we prove that the `Main Identity' implies all the
reduced ones. Finally, in the next section, we interpret some
classical and recent relations of the singularity theory as
reduced identities.

We fix $(X,o)$ and the resolution  $\pi$ as in (\ref{setup}). Let
$\cU$ be a non-empty subset of 
$\cV$. Our plan is to define formal series in variables
$\{t_j\}_{j\in \cU}$, still denoted by $\bt$. It is convenient to
define the projection $pr_\cU:L'\to L'$ by $pr_\cU(\sum_jl'_jE_j)=
\sum_{j\in \cU}l'_jE_j$. Correspondingly, we write
\begin{equation*}
\bt^{pr_\cU(l')}=\prod_{j\in \cU}t_j^{l'_j}=\bt^{l'}|_{t_j=1\,
\mbox{\tiny{for all}} \,j\not\in \cU}.\end{equation*}

Although, in the non-reduced case, $\bt^{l'}$ codifies completely
the $\theta(h)$-eigenspace which provides the coefficient of
$\bt^{l'}$ (via $h=[l']$), this is not true anymore for
$\bt^{pr_\cU(l')}$. Hence, in the reduced context we lose this
correspondence, and we do not consider $h$-components in the sense
of (\ref{hcompa}).
\begin{definition}
For any $h\in H$ set
\begin{equation}\label{zeta2r}
\zeta_h^\cU(\bt) \coloneqq \frac{1}{\lvert H \rvert} \cdot
\sum_{\rho \in \widehat{H}}\, \rho(h)^{-1}\cdot \prod_{j\in\cV}
{(1-\rho([E^*_j]) \bt^{pr_\cU(E^*_j)})}^{\delta_j-2},
\end{equation}
or, equivalently,
\begin{equation}\label{zeta2r2}
\zeta_h^\cU(\bt)\coloneqq  \zeta_h(\bt)|_{t_j=1\, \emph{\tiny{for
all}} \,j\not\in \cU}.
\end{equation}
\end{definition}
Having in mind (\ref{TH1}), we will identify 
$\zeta^\cU_h$ and $Z^\cU_h$ (where, one might define 
$Z^\cU_h(\bt):=Z_h(\bt)|_{t_j=1\, \mbox{\tiny{for all}} \,j\not\in
\cU}$).

Next, we define the  analytic invariants. For any $l'_\cU\in
pr_\cU(L')\subset L'$ set
\begin{equation*}
\cF^\cU(l'_\cU):=\{ f\in \cO_{Y,o}\ | \ div(f\circ \pi_Y)\geq
\widetilde{c}^*(l'_\cU)\}.
\end{equation*}
Moreover, for any $l'_\cU\in pr_\cU(L')$ and $h\in H$, let
$\hh^\cU_h(l'_\cU)$ be the dimension of the $\theta(h)$-eigenspace
$(\cO_{Y,o}/\cF^\cU(l'_\cU))_{\theta(h)}$. Then set
\begin{align*}
H^\cU_h(\bt)&:=\sum_{l'_\cU\in pr_\cU(L')}\hh^\cU_h(l'_\cU)\cdot\bt^{l'_\cU},\\
 L^\cU_h(\bt)&:=\sum_{l'_\cU\in pr_\cU(L')}\dim
\Big(
\frac{\cF^\cU(l'_\cU)}{\cF^\cU(l'_\cU+E_\cU)}\Big)_{\theta(h)}\cdot
\bt^{l'_\cU},\\
P^\cU_h(\bt)&:= -\frac{L^\cU_h(\bt)\prod_{j\in
\cU}(t_j-1)}{1-\bt^{E_\cU}}.
\end{align*}
Then, again,
\begin{equation}\label{eq:4red}
P^\cU_h(\bt)=-H^\cU_h(\bt)\cdot \prod_{j\in \cU}(1-t_j^{-1}).
\end{equation}

\begin{definition}\label{MIRed}
 We say that $(X,o)$ and the resolution $\pi$ satisfy
 the {\bf Reduced Identity} for $h\in H$, corresponding to the subset $\cU\subset \cV$, if
\begin{equation}\label{MIFRed}
Z^\cU_h(\bt)= P^\cU_{h}(\bt)  \ \ \ \mbox{(\,in\
$\setZ[[t_j^{1/d},j\in\cU]]$, \, or in
$\setZ[[\bt^{pr_\cU(L')}]]$)}.
\end{equation}
We say that $(X,o)$ and $\pi$ satisfy the {\bf Reduced Identity} (for
$\cU$) if (\ref{MIFRed}) is true for all $h\in H$.
\end{definition}
Although, by (\ref{zeta2r2}), $Z^\cU$ can be
obtained by a simple substitution from $Z$, the same statement is
not immediate for $P^\cU$. Nevertheless, we have the following:

\begin{theorem}\label{MR}
\begin{equation}\label{reduc}
P_h^\cU(\bt)=P_h(\bt)|_{t_j=1\ \mbox{\tiny{for all}} \ j\not\in
\cU}.\end{equation} In particular, if for some singularity
$(X,o)$, resolution $\pi$ and element $h\in H$ the Main Identity
(\ref{MI}) is true, then for any non-empty $\cU\subset\cV$ the
Reduced Identity (for $(X,o)$, $\pi$ and $h$) is also true.
\end{theorem}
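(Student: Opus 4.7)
The proof of (\ref{reduc}) is the main content of Theorem~\ref{MR}; its consequence for the Main and Reduced Identities is immediate, since substituting $t_j=1$ for $j\not\in\cU$ in $Z_h(\bt)=P_h(\bt)$ yields $Z^\cU_h(\bt)=P^\cU_h(\bt)$ by (\ref{zeta2r2}) and (\ref{reduc}). So everything reduces to proving the specialization identity (\ref{reduc}).

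The first step is a filtration identification. For $l'_\cU\in pr_\cU(L')$, viewed as a rational cycle with zero coefficients on $\cV\setminus\cU$, the condition $\text{div}(f\circ\pi_Y)\geq\tilde c^*(l'_\cU)$ defining $\cF^\cU(l'_\cU)$ is vacuous along components of $\pi_Y$ mapping to $E_j$ for $j\not\in\cU$. Consequently, for any lift $l'\in L'$ of $l'_\cU$ with $l'_j$ sufficiently negative for $j\not\in\cU$ (available since $E_j\in L$ can be subtracted freely), one has $\cF^\cU(l'_\cU)=\cF(l')$, and therefore $\hh^\cU_h(l'_\cU)=\tilde\hh_h(l')\coloneqq\dim(\cO_{Y,o}/\cF(l'))_{\theta(h)}$, independently of the chosen lift. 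This dimension is well-defined for any $h\in H$, not only $h=[l']$.

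Starting from $P_h(\bt)=-H_h(\bt)\prod_{j\in\cV}(1-t_j^{-1})$ and $P^\cU_h(\bt)=-H^\cU_h(\bt)\prod_{j\in\cU}(1-t_j^{-1})$, and factoring $\prod_{j\in\cV}=\prod_{j\in\cU}\cdot\prod_{j\not\in\cU}$, identity (\ref{reduc}) reduces coefficient-by-coefficient to the assertion that for each $l'_\cU\in pr_\cU(\cS')$,
\begin{equation*}
\sum_{\substack{l'\in\cS',\,[l']=h\\ pr_\cU(l')=l'_\cU}}\pp_h(l')=\pp^\cU_h(l'_\cU)=\sum_{I\subset\cU}(-1)^{|I|+1}\tilde\hh_h(l'_\cU+E_I),
\end{equation*}
where $\pp_h(l')=\sum_{I\subset\cV}(-1)^{|I|+1}\tilde\hh_h(l'+E_I)$. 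The left-hand sum is \emph{finite}, which justifies the specialization of $P_h(\bt)$ at $t_j=1$ ($j\not\in\cU$): since the intersection matrix is a negative-definite $M$-matrix on a tree, $-I^{-1}$ has strictly positive entries, so each $pr_\cU(E_j^*)$ has strictly positive $\cU$-components, and the equation $\sum_j n_j\,pr_\cU(E_j^*)=l'_\cU$ with $n_j\in\setN$ admits only finitely many solutions; hence $pr_\cU^{-1}(l'_\cU)\cap\cS'$ is finite.

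To verify the coefficient identity, decompose $I=I_\cU\sqcup I^\perp$ with $I_\cU\subset\cU$, $I^\perp\subset\cV\setminus\cU$, and $l'=l'_\cU+l'^\perp$ with $l'^\perp\in L'$ supported on $\cV\setminus\cU$. Applying the first step componentwise on $\cV\setminus\cU$, $\tilde\hh_h(l'+E_I)$ depends only on $l'_\cU+E_{I_\cU}$ and on the positive part $(l'^\perp+E_{I^\perp})^+$ of $l'^\perp+E_{I^\perp}$. Grouping terms by $w\coloneqq(l'^\perp+E_{I^\perp})^+$ and running an inclusion-exclusion over $I^\perp$ subject to the constraint $l'\in\cS'$ on the bounded polyhedral fiber $pr_\cU^{-1}(l'_\cU)\cap\cS'$, the plan is to show that all contributions with $w\not=0$ cancel by signed-counting, leaving only the boundary contribution $\tilde\hh_h(l'_\cU+E_{I_\cU})$; summing over $I_\cU\subset\cU$ with signs $(-1)^{|I_\cU|+1}$ then produces $\pp^\cU_h(l'_\cU)$. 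The main obstacle is organising this signed-counting rigorously: one must show that the alternation over $I^\perp$ interacts with the $\cS'$-constraint so as to contribute $1$ to the fiber $w=0$ and $0$ to each other fiber of the projection $(l'^\perp,I^\perp)\mapsto(l'^\perp+E_{I^\perp})^+$, which is essentially an Euler-characteristic-type identity on the polyhedral slice.
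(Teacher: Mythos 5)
Your first step (the identification $\hh^\cU_h(l'_\cU)=\tilde\hh_h(l')$ for a lift with sufficiently negative coefficients off $\cU$) and the finiteness argument justifying the specialisation are both correct and genuinely useful. But the final, load-bearing combinatorial lemma you outline is false, so the argument does not close. You reduce the claim to showing, for each fixed $I_\cU\subset\cU$, that the inner alternating sum over $(l'^\perp,I^\perp)$ collapses to $\tilde\hh_h(l'_\cU+E_{I_\cU})$, by arguing that the signed count of pairs mapping to $w=(l'^\perp+E_{I^\perp})^+$ is $1$ for $w=0$ and $0$ otherwise. Already the smallest non-trivial example kills this. Take the $A_2$ graph ($\cV=\{1,2\}$, both $(-2)$, one edge), $\cU=\{1\}$, $h=0$, $l'_\cU=0$, $I_\cU=\emptyset$. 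The only $l'\in\cS'$ with $pr_\cU(l')=0$ is $l'=0$, so $l'^\perp=0$; the fiber over $w=E_2$ then consists of the single pair $(0,\{2\})$ with sign $(-1)^1=-1$, so its signed count is $-1$, not $0$. Consequently the inner sum is $-\tilde\hh_0(E_2)=-1$ rather than $\tilde\hh_0(0)=0$, and for $I_\cU=\{1\}$ it is $\tilde\hh_0(E_1)-\tilde\hh_0(E_1+E_2)=0$ rather than $\tilde\hh_0(E_1)=1$. The global identity $\sum_{I_\cU}(-1)^{|I_\cU|+1}(\cdot)$ still comes out to $1=1$ because the $I_\cU$-wise errors cancel after weighting by signs, but this cancellation is exactly what your plan does not explain; the $I_\cU$-by-$I_\cU$ reduction is not available, and a correct proof must couple the $I_\cU$ sum with the $I^\perp$ sum rather than peel them apart.

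The paper avoids the whole issue by not going coefficient-by-coefficient. It proves \eqref{reduc} by descending induction on $|\cU|$: write $\bar\cU=\cU\setminus\{i_0\}$ and show
\begin{equation*}
L^\cU_h(\bt)(t_{i_0}-1)\big|_{t_{i_0}=1}=L^{\bar\cU}_h(\bt),
\end{equation*}
which immediately gives \eqref{reduc} via the definitions of $P^\cU_h$ and $P^{\bar\cU}_h$. Writing $L^\cU_h(\bt)\bt^{-pr_\cU(r_h)}=\sum_{l\in L_\cU}a^\cU_h(l)\bt^l$, one fixes $\bar l\in L_{\bar\cU}$ and observes two one-sided stabilisations for the sequence $k\mapsto a^\cU_h(\bar l+kE_{i_0})$: it equals $a^{\bar\cU}_h(\bar l)$ for $k\ll 0$ (the constraint along $E_{i_0}$ becomes vacuous, which is your own first step specialised to one vertex), and it vanishes for $k\gg 0$ (a Lipman-cone boundedness: if $s\in\cS$ satisfies $s\geq l$ and $s_{i_0}$ is large enough, then $s\geq l+E$). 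Multiplying by $(t_{i_0}-1)$ then telescopes the one-variable series and evaluating at $t_{i_0}=1$ leaves exactly $a^{\bar\cU}_h(\bar l)$. This is more elementary than a polyhedral Euler-characteristic identity, works one vertex at a time so there is never a fiber sum to control, and requires only the two stabilisation facts; no inclusion-exclusion over $I^\perp$ appears at all.

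If you want to salvage a direct coefficient proof, you would at least need to prove the full identity
\begin{equation*}
\sum_{I_\cU\subset\cU}(-1)^{|I_\cU|}\sum_{\substack{l'\in\cS',\,[l']=h\\pr_\cU(l')=l'_\cU}}\sum_{I^\perp\subset\cV\setminus\cU}(-1)^{|I^\perp|}\,\tilde\hh_h\bigl(l'_\cU+E_{I_\cU}+(l'^\perp+E_{I^\perp})^+\bigr)=\sum_{I_\cU\subset\cU}(-1)^{|I_\cU|}\,\tilde\hh_h(l'_\cU+E_{I_\cU})
\end{equation*}
globally, without separating the $I_\cU$'s; but at that point the one-vertex-at-a-time telescope is both simpler and already available, since removing a single vertex is the $|\cV\setminus\cU|=1$ case of the same statement.
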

\begin{proof}
We prove (\ref{reduc}) by descending induction over the
cardinality of $\cU$. Set $\bar{\cU}\coloneqq \cU\setminus i_0$
such that $\bar{\cU}\not=\emptyset$. Then, using the above
identities, it is enough to verify
\begin{equation}\label{lbar}
L^\cU_h(\bt)(t_{i_0}-1)|_{t_{i_0}=1}=L^{\bar{\cU}}_h(\bt).\end{equation}
Let $L_\cU$ be the lattice generated by $\{E_j\}_{j\in \cU}$. Then
$L_h^\cU(\bt)\cdot \bt^{-pr_\cU(r_h)}$ has the form $\sum_{l\in
L_\cU}\ a^\cU_h(l)\ \bt^{l}$. This can be rewritten as
$$\sum_{\bar{l}\in L_{\bar{\cU}}} \ \bt^{\bar{l}}\
\sum_{k\in\setZ} \, a^\cU_h(\bar{l}+kE_{i_0})\, t_{i_0}^k.$$ By
definition,
\begin{equation*}
a^\cU_h(\bar{l}+kE_{i_0})=a^{\bar{\cU}}_h(\bar{l})\ \ \ \mbox{for
$k\ll 0$}.
\end{equation*}
Also, for any $l\in L$ there exists an integer $n$ such that if
for some $s\in \cS$ one has $s\geq l$ and $s_{i_0}\geq n$, then
automatically $s\geq l+E$ too. Therefore
\begin{equation*}
a^\cU_h(\bar{l}+kE_{i_0})=0\ \ \ \mbox{for $k\gg 0$}.
\end{equation*}
Hence
\begin{equation*}
\sum_{k\in\setZ} \, a^\cU_h(\bar{l}+kE_{i_0})\, t_{i_0}^k
(t_{i_0}-1)|_{t_{i_0}=1}= a^{\bar{\cU}}_h(\bar{l}).
\end{equation*}
\end{proof}

\section{Examples, counterexamples}

\subsection{} Theorem (\ref{MR}) connects the Main Identity
with some non-trivial results, already present in the literature.
In the beginning of this section we list some of them. Then, we
list some examples when $Z\not=P$, trying to catch the limits of
the Main Identity.

 In this section, we focus on the situation
when {\em $\cU$ contains only one element}, say $j_0$. Notice that
even in this case, the verification (or disprove) of the identity
(\ref{MIFRed}) for some specific analytic structures can be hardly
non-trivial.

The unique variable will be denoted by $t$; notice that
$P^\cU_h(t)=L^\cU_h(t)$. For the trivial element $0\in H$, the
filtration is the divisorial (valuation) filtration associated
with $E_{j_0}$ on $\cO_{X,o}$, and $L^\cU_0(t)$ is the Poincar\'e
series of the associated graded algebra.

\subsection{Example (weighted homogeneous singularities)}\label{qh} Assume
that $(X,o)$ is the germ at the
unique singular point of an affine space $X$ which admits a good
$\setC^*$ action (and  we also assume that it is not a cyclic
quotient singularity), and let $\pi$ be its minimal good
resolution. Then $\cO_{X,o}$ admits a grading $\oplus_{k\geq
0}A_k$ with Poincar\'e series $p_{X,o}(t)=\sum_{k}\dim A_kt^k$.
The link of such a singularity is a Seifert 3-manifold. In fact,
the dual graph of $\pi$ is star-shaped, which can be identified by
the Seifert invariants of the link as well. Assume that $\cU$
contains only one element, namely the central exceptional divisor
$E_{cen}$ of $\pi$.

For such a singularity, Pinkham in \cite{Pi1} proved that
$L_0^\cU(t)=p_{X,o}(t)$ and computed this Poincar\'e series  in
terms of the Seifert invariants of the link. On the other hand,
\cite[(5.2)]{nemethi04:_seiber_witten} identifies this expression
with $\zeta_0(t)$. In particular, for such a singularity, the
identity $P^\cU_0=Z^\cU_0$ follows.

The equivariant version $P^\cU_h=Z^\cU_h$ is proved by Neumann in 
\cite[\S4]{Neu} (for a more general argument, see
(\ref{sq})). From \cite{Neu}  one also obtains that
$\sum_hZ_h^\cU(t^c)$ is the Poincar\'e series $p_{Y,o}(t)$ of the
universal abelian cover $(Y,o)$ (where $c$ is the order of
$[E^*_{cen}]$ in $H$).

\subsection{Example (splice-quotient singularities)}\label{sq} Assume that $(X,o)$ is a
splice-quotient singularity (introduced and intensively studied by
Neumann and Wahl, see e.g. \cite{NWuj2}). We point here in short
what we mean by this.

We fix a possible resolution graph $\Gamma$. (It should satisfy
some combinatorial properties which are not essential in the
present discussion.) Then, using $\Gamma$, one can write equations
(up to an equisingular deformation) 
for the universal abelian cover $(Y,o)$ of $(X,o)$ and for the
action of $H$ on $(Y,o)$. These equations determine the analytic
structure of $(X,o)$. Then, the resolution used in all our discussions
is exactly that resolution $\pi_\Gamma$ of $(X,o)$ which has 
dual graph $\Gamma$ (it is  uniquely determined by $\Gamma$).
Notice that there is always a linkage between the analytic structures
constructed by this procedure and the graph used in the construction.  
The construction gives hope that the analytic invariants of the
singularity might be determined combinatorially from the dual
resolution graph, if one writes them (exactly) from $\pi_\Gamma$. 
Otherwise, this might not be the case, and the topology-analytic 
linkage might be lost, see also  (\ref{ex.7.new}).

We recall that splice-quotient singularities  include all the rational 
singularities (where $\Gamma$ can be arbitrary), all the 
minimally elliptic singularities (where $\Gamma $ has the property that
the support of the minimal elliptic cycle is not proper
--- e.g., for all minimally good resolutions)  \cite{Ouac-c}, 
and also those singularities which admit good $\setC^*$ action (and
$\Gamma$ is the minimal good resolution) \cite{Neu}.

Let us fix a graph $\Gamma$ and consider  the associated
splice-quotient singularity  and the resolution $\pi_\Gamma$. 
Set $\cU=\{j_0\}$, where $j_0$ is one of the nodes of $\Gamma$
(i.e. $\delta_{j_0}\geq 3$). Then the identity $Z_h^\cU=P_h^\cU$
was proved by Okuma in \cite[\S 3]{Opg}. (The same is true if $j_0$
is not a node, but is not an end-vertex either, and $H=1$, see
\cite{NO1}.)

In other words, by Okuma's result, the Main Identity reduced to
any individual node is true (in $\pi_\Gamma$).  
This is improved in \cite{BN_splice1},
where the Main Identity is proved in its whole generality.
(Notice that this very general result includes theorem (\ref{MTH})
too, nevertheless the proofs are very different, and the proof of 
(\ref{MTH}) might illuminate some aspects --- e.g. the relevance with vanishing
theorems ---, which are less transparent in \cite{BN_splice1}.)

\subsection{Example (hypersurface singularities)}\label{wh1}
This is a small digression about Poincar\'e polynomial
computations of (not necessarily weighted homogeneous)
hypersurface singularities when the filtration can be connected
with some weights.

Fix three positive integers ${\bf w}=(w_1,w_2,w_3)$, and consider
on $\cO:=\cO_{\setC^3,o}$ the graded ring structure $\oplus_{e\geq
0}\cO(e)$, where the three coordinates have degrees
$\deg(z_i)=w_i$. Here $\cO(e)$ is the vector space of monomials of
${\bf w}$-degree $e$.  Fix $f\in \cO$ and define
$(X,o):=\{f=0\}\subset (\setC^3,o)$. Write $f=f_d+f_{d+1}+\cdots$,
where $f_k$ is homogeneous of ${\bf w}$-degree $k$, and
$f_d\not=0$. Let $q:\cO\to\cO_{X,o}$ be the natural projection. 

We will analyze different  filtrations $\{\cF(l)\}_{l\geq 0}$ of the
$\setC$-algebra $\cO_{X,o}$. As usual, for any filtration $\cF$, we write
$Gr_l^\cF:=\cF(l)/\cF(l+1)$, and $P^\cF(t):= \sum_{l\geq
0}\dim(Gr_l^\cF)t^l$ for its Poincar\'e series.

First, consider the filtration $\cG$ defined by
$\cG(l):=q(\oplus_{e\geq l}\cO(e))$. The next isomorphism can be
easily verified and is left to the reader:

\begin{lemma}\label{wh2}
$$Gr^\cG_l \simeq \frac{\cO(l)}{f_d\cdot \cO(l-d)}.$$
In particular, $P^\cG(t)$ agrees with the Poincar\'e series of
$\cO/(f_d)$ with weights ${\bf w}$, namely
$P^\cG(t)=(1-t^d)/\prod_i (1-t^{w_i})$.
\end{lemma}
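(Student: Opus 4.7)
The plan is to construct an explicit surjection $\psi_l\colon \cO(l) \to Gr^\cG_l$ sending a weighted-homogeneous element $h_l$ of degree $l$ to the class $[q(h_l)]$ modulo $\cG(l+1)$, then identify its kernel with $f_d\cdot\cO(l-d)$. The Poincar\'e series formula then follows immediately by summing dimensions across $l$ and invoking the standard Poincar\'e series $\prod_i(1-t^{w_i})^{-1}$ of the weighted polynomial/power series ring.

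Surjectivity is the easy warm-up. Any element of $\cG(l)$ has the form $q(H)$ for some $H \in \cO$ of weighted valuation $\geq l$. Writing $H = h_l + H'$ with $h_l \in \cO(l)$ and $H'$ of weighted valuation $\geq l+1$ immediately gives $q(H) \equiv q(h_l) \pmod{\cG(l+1)}$, so the map hits everything.

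The substantive step is computing the kernel. Suppose $\psi_l(h_l)=0$, so there exist $H' \in \cO_{\geq l+1}$ and $g \in \cO$ with $h_l - H' = f\cdot g$. I would expand $g = \sum_{k\geq 0}g_k$ and $f = f_d + f_{d+1} + \cdots$ in their weighted-homogeneous decompositions and compare the degree-$n$ components. For each $n<l$ the equation forces $(fg)_n = 0$; since $\cO$ is a domain and $f_d\neq 0$, an induction on $n$ starting at $n=d$ yields $g_0 = g_1 = \cdots = g_{l-d-1} = 0$. Then the degree-$l$ component of $fg$ collapses to $f_d\cdot g_{l-d}$, showing $h_l \in f_d\cdot\cO(l-d)$. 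The reverse inclusion is a one-line check: for $h_l = f_d g_{l-d}$ one writes $f_d g_{l-d} = f g_{l-d} - (f-f_d)g_{l-d}$ and notices the second term lies in $\cO_{\geq l+1}$, so $q(h_l) \in \cG(l+1)$.

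The only real obstacle — and it is more a matter of clean bookkeeping than of difficulty — is justifying the degree-by-degree comparison inside the local ring $\cO_{\setC^3,o}$ rather than in a polynomial ring. This is harmless because all weights $w_i$ are strictly positive: the weighted valuation is well defined, each graded piece $\cO(e)$ is finite-dimensional, the filtration is exhaustive and Hausdorff, and every element of $\cO$ has a unique convergent homogeneous expansion that is respected by multiplication. Granted this, the conclusion $\dim Gr^\cG_l = \dim\cO(l) - \dim\cO(l-d)$ is immediate (multiplication by $f_d$ is injective since $\cO$ is a domain), and summing gives $P^\cG(t) = (1-t^d)\prod_i(1-t^{w_i})^{-1}$, which is exactly the Poincar\'e series of $\cO/(f_d)$ with the weights $\mathbf{w}$.
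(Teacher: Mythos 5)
Your proof is correct, and it is the natural direct argument the paper has in mind when it says the isomorphism ``can be easily verified and is left to the reader'': one shows the obvious map $\cO(l)\to Gr^\cG_l$ is onto, then identifies its kernel as $f_d\cdot\cO(l-d)$ by comparing weighted-homogeneous components of $h_l - H' = fg$ degree by degree, which is legitimate because all $w_i>0$ makes each $\cO(e)$ finite-dimensional. The dimension count and the final Poincar\'e series identity follow exactly as you write them.
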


Next, we assume that $f$ is irreducible (this is automatically
satisfied if $(X,o)$ is normal). Let $K$ be the field of fraction
of $\cO_{X,o}$.  We are interested in filtrations determined by a
valuation $\nu:K^*\to \setZ$ with the property $\nu(z_i)=w_i$: we
set $\cF(l):=\{g\in \cO_{X,o}\,:\ g\not=0, \ \nu(g)\geq
l\}\cup\{0\}$. Clearly, from the definition of the valuations, we
have $\cG(l)\subset \cF(l)$ for all $l$. But the equality, in
general, fails.

\begin{remark}\label{wh3}
$\nu$ may appear as follows. Consider $\phi$, the ${\bf
w}$-weighted blow up of $(X,o)$,  and let $E$ be its strict
transform. (One might even take any resolution or modification
$\pi$ which dominates $\phi$, and identify $E$ with its
$\pi$-strict transform.) Assume that $E$ is irreducible. Then the
divisorial valuation $\nu$ of $E$ provides such a filtration
$\cF$.
\end{remark}

In the next lemma we restrict ourself to the case $w_3=1$
--- this is enough in the main application in (\ref{si});
we invite the reader to formulate the corresponding statement for
more general weights (when one must handle an additional cyclic
group action).

\begin{lemma}\label{wh4} Assume $w_3=1$ and $f_d$ irreducible
(but $f_d\not=z_3$). 
Consider the filtration $\cF$ as in (\ref{wh3}). Then $\cF=\cG$.
In particular, $P^\cF(t)=(1-t^d)/\prod_i(1-t^{w_i})$.

\end{lemma}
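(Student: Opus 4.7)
The inclusion $\cG(l)\subset\cF(l)$ is automatic from the definition of $\nu$ via the ${\bf w}$-weighted blow up: any monomial $z_1^{a_1}z_2^{a_2}z_3^{a_3}$ of ${\bf w}$-degree $e$ pulls back to a function of $u$-order $e$ in the standard charts, and $u$ restricts to a local equation of $E$ on $\widetilde X$. The content is the reverse inclusion. My plan is: starting from any lift $h\in\cO$ of $g\in\cF(l)$, iteratively subtract a multiple of $f$ from $h$ so as to kill its current leading ${\bf w}$-homogeneous term, until only components of ${\bf w}$-degree $\geq l$ remain. The cancellation step works precisely because $f_d$ is irreducible.

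Work in the $z_3$-chart of the weighted blow up $\Phi$ of $\setC^3$, with coordinates $(u,v_1,v_2)$ and $z_1=u^{w_1}v_1$, $z_2=u^{w_2}v_2$, $z_3=u$. The exceptional divisor of $\Phi$ is $\{u=0\}$, the strict transform $\widetilde X$ is cut out by $\tilde f:=(f\circ\Phi)/u^d$, and $\tilde f|_{u=0}=f_d(v_1,v_2,1)$, a non-constant polynomial since $f_d\neq z_3$. Hence $E=\widetilde X\cap\{u=0\}$ is the (reduced irreducible) curve $\{f_d(v_1,v_2,1)=0\}$, and $u|_{\widetilde X}$ is a uniformizer for $E$ generically. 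For $h=\sum_{e\geq e_0}h_e$ with $h_{e_0}\neq 0$ one has
\[
h\circ\Phi \;=\; u^{e_0}\bigl(h_{e_0}(v_1,v_2,1)+u\cdot(\text{rest})\bigr),
\]
which yields the dictionary $\nu(q(h))=e_0$ if $h_{e_0}(v_1,v_2,1)$ does not vanish identically on $E$, and $\nu(q(h))>e_0$ otherwise. In the latter case, by ${\bf w}$-homogeneity $h_{e_0}$ vanishes on $V(f_d)\cap\{z_3\neq 0\}\subset\setC^3$; since $f_d\neq z_3$ this is Zariski dense in the irreducible surface $V(f_d)$, so $h_{e_0}$ vanishes on all of $V(f_d)$. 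The Nullstellensatz and primality of $(f_d)$ in the UFD $\setC[z_1,z_2,z_3]$ now give $h_{e_0}=f_d\cdot g_0$ for a unique ${\bf w}$-homogeneous $g_0\in\cO(e_0-d)$; in particular $e_0\geq d$.

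Given $g\in\cF(l)$ and any lift $h$ whose initial ${\bf w}$-degree $e_0$ satisfies $e_0<l$, the hypothesis $\nu(g)\geq l>e_0$ combined with the dictionary above produces $g_0\in\cO(e_0-d)$ with $h_{e_0}=f_d g_0$; the new lift $h-fg_0$ of $g$ has initial ${\bf w}$-degree strictly greater than $e_0$. Iterating at most $l-e_0$ times yields a lift in $\oplus_{e\geq l}\cO(e)$, whence $g\in\cG(l)$. Combined with Lemma~\ref{wh2} this gives $P^\cF(t)=(1-t^d)/\prod_i(1-t^{w_i})$. The step I expect to be most delicate is the local dictionary itself, i.e.\ showing that $\widetilde X$ is generically smooth along $E$ and that $\text{ord}_E(u|_{\widetilde X})=1$; both follow from $\tilde f|_{u=0}=f_d(v_1,v_2,1)$ being a non-zero reduced polynomial, where the reducedness in turn follows from the irreducibility of $f_d$ via a homogenization argument in the graded UFD $\setC[z_1,z_2,z_3]$.
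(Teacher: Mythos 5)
Your proposal is correct and follows essentially the same route as the paper: work in the $z_3$-chart of the weighted blow up, read off the order of vanishing along $E$ from the initial ${\bf w}$-homogeneous term of a lift, use irreducibility of $f_d$ to conclude $f_d\mid h_{e_0}$, and subtract a multiple of $f$ to raise the initial degree. The only difference is presentational: you spell out why $u$ vanishes to order one along $E$ (reducedness of $f_d(v_1,v_2,1)$) and invoke density plus the Nullstellensatz where the paper argues by re-homogenizing, but these are interchangeable and the paper's proof implicitly relies on the same reducedness.
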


\begin{proof} We may do the computations in a chart of the weighted blow up
$\phi:Z\to \setC^3$. We fix the chart $(z_1,z_2,z_3)\mapsto
(\alpha^{w_1}\beta,\alpha^{w_2}\gamma,\alpha)$ of $\phi$. Then the
strict transform $\widetilde{X}$ of $X$ has local equations
$\{f_d(p)+\alpha f_{d+1}(p)+\cdots=0\}$, while the exceptional
curve $E\subset \widetilde{X}$ is $\{f_d(p)=\alpha=0\}$, where
$p=(\beta,\gamma,1)$.

We wish to show that $\cF(l)\subset \cG(l)$. For this, take $g$ with
$q(g)\in\cF(l)$, and write $g$ as a sum of ${\bf w}$-homogeneous elements
$g_k+g_{k+1}+\cdots $. If $k\geq l$ then $q(g)\in \cG(l)$, hence
assume that $k<l$. By substitution, $\phi^*(g)=\alpha^k h$, where
$h:=g_k(p)+\alpha g_{k+1}(p)+\cdots$. Since the vanishing order of
$\phi^*(g)$ is $\geq l$, $h$ must vanishes along $E$, hence there
exists local functions $u$ and $v$ in $(\alpha,\beta,\gamma)$ such
that $h=uf_d(p)+v\alpha$. Taking $\alpha=0$, we get a function
$\omega(\beta,\gamma)$
such that $g_k(\beta,\gamma,1)=\omega(\beta,\gamma)\cdot
f_d(\beta,\gamma,1)$. We interpret this in coordinates $z$. The 
homogenized identity has the form $g_k(z)=\tilde{\omega}(z)\cdot
f_d(z)\cdot z_3^a$ for some $a\in\setZ$. 
Since $f_d$ is irreducible, we obtain  $f_d|g_k$. Then
$g':=g-g_k+(g_k/f_d)(-f+f_d)$ has the property that $q(g')=q(g)$
but its $k$-homogeneous term vanishes. By induction, $g$ can be
replaced by $g''$ with $q(g'')=q(g)$ and $g''\in \oplus _{e\geq
l}\cO(e)$, hence $q(g)\in \cG(l)$.
\end{proof}

\begin{example}\label{237} Assume that $(X,o)$ is the zero set of
the unimodal exceptional hypersurface singularity of type
$E_{12}$: $f_a=z_1^2+z_2^3+z_3^7+ayz^5$. One may verify that
$(X,o)$ is minimally elliptic.  Let $\pi$ be its  minimal good
resolution, whose dual graphs is:

\begin{picture}(150,45)(-100,10)
\put(40,40){\circle*{4}} \put(70,40){\circle*{4}}
\put(100,40){\circle*{4}} \put(70,20){\circle*{4}}
\put(40,40){\line(1,0){60}} \put(70,40){\line(0,-1){20}}
\put(40,50){\makebox(0,0){$-7$}} \put(70,50){\makebox(0,0){$-1$}}
\put(100,50){\makebox(0,0){$-2$}} \put(80,20){\makebox(0,0){$-3$}}
\put(40,30){\makebox(0,0){$E_1$}}
\put(-10,30){\makebox(0,0){$\Gamma:$}}
\end{picture}

Let $E_{cen}$ be the node, and $\cU=\{E_{cen}\}$. Then
$Z^\cU=P^\cU$  by (\ref{qh}) if $a=0$, or by (\ref{sq}), in general.
Here we would like to show the same identity for $\cU=\{E_1\}$
(see the picture). Notice that $H$ is trivial. By a computation:
\begin{equation}\label{ZU}
Z^\cU(t)=\frac{t^6-1}{(t^3-1)(t^2-1)(t-1)}. \end{equation} On the
other hand, $P^\cU$ can be computed by (\ref{wh4}): the weights
are ${\bf w}=(3,2,1)$, hence  $P^\cU=Z^\cU$ follows.

Notice that $\pi$ is not minimal, hence (\ref{MTH}) cannot be
applied. The graph of all minimally elliptic singularities (graphs)
whose minimal resolution is not good are listed in \cite{Laufer77}. 
Those which provide rational homology sphere links are similar with
the above example, and for all of them by similar method on can show
that $Z=P$. (This fact follows also from \cite{BN_splice1},  where
$Z=P$ is proved for splice-quotients, since this applies for minimally
elliptic singularities and their minimal good resolutions, cf. (\ref{sq}).)
\end{example}

\subsection{Example (superisolated singularities)}\label{si} Let $(X,o)$ be a hypersurface
super-isolated singularity (a family introduced by I. Luengo in
\cite{Luengo}). More precisely, $(X,o)=(\{f=0\},o)\subset
(\setC^3,o)$, where $f=f_d+f_{d+1}+\cdots$, $f_k$ is homogeneous
of degree $k$, and the projective curve $\{f_{d+1}=0\}\subset
\setC\setP^2$ does not meet the singular points of $C\coloneqq
\{f_d=0\}\subset \setC\setP^2$. We assume $d\geq 2$ and  that {\em
$C$ is irreducible}. The link of $(X,o)$ is a rational homology
sphere if and only if $C$ is rational and cuspidal (i.e. all the
singular points $(C,p_i)_{i=1}^\nu$ of $C$ are locally
irreducible). In the sequel we assume these facts too.

The minimal resolution of $(X,o)$ can be obtained by a single blow
of the maximal ideal. The exceptional divisor (corresponding to
the tangent cone) is isomorphic to $C$. In general, this is not a
good resolution, one has to resolve the singularities of $C$ as
well. Hence, the minimal good resolution graph can be obtained
from the minimal good embedded resolution graphs $\Gamma_i$ of the
plane curve singularities $(C,p_i)_{i=1}^\nu$ by adding one extra
vertex $v_C$ (corresponding to $C$), and  for all $i$ connecting
$v_C$ by one edge to the $(-1)$-curve of $\Gamma_i$. $\pi$ will
stay for this minimal good resolution and $\cU=\{v_C\}$.

All the needed invariants regarding the next discussion (excepting
$P^\cU$) were computed in \cite{SI}, \cite{MR2192386} and
\cite{ratopen}. We invite the reader to consult these articles for
more details. We will consider the $h=0$ case only.

We write $\Delta(t)$ for the product of the characteristic
polynomials (associated with the  monodromy) of all local plane
curve singularities $(C,p_i)\subset (\setC^2,p_i)$. One may verify
that $H=\setZ_d$  and 
$$Z_0^\cU(t)=\frac{1}{d}\sum_{\xi^d=1}\frac{\Delta(\xi t^{1/d})}{(1-\xi
t^{1/d})^2}.$$ On the other hand, the Poincar\'e series
(associated with the divisorial filtration of the projectivised
tangent cone) is given by (\ref{wh4}), namely
$$P_0^\cU(t)=\frac{1-t^d}{(1-t)^3}.$$
Following \cite{MR2192386}, we define
\begin{equation}\label{N}
N(t)\coloneqq \frac{1}{d}\sum_{\xi^d=1}\frac{\Delta(\xi
t^{1/d})}{(1-\xi t^{1/d})^2}-\frac{1-t^d}{(1-t)^3}.
\end{equation}
Clearly, the Reduced Identity (for $\cU=\{C\}$ and $h=0$) is
equivalent with the vanishing of $N(t)$. In
\cite[(2.4)]{MR2192386} is proved that $N(t)$ is a symmetric
polynomial (i.e. $N(t)=t^{d-3}\cdot N(1/t)$) with integral
coefficients and with $N(0)=0$. Moreover,
$$N(1)=sw(\Sigma)-(K_{\widetilde{X}}^2+s)/8-p_g(X,o),$$
where $sw(\Sigma)$ is the Seiberg--Witten invariant of $\Sigma$
associated with the canonical $spin^c$-structure,
$K_{\widetilde{X}}$ is the canonical class of $\widetilde{X}$,
$s=|\cV|$ as above, and $p_g(X,o)$ is the geometric genus of
$(X,o)$. Notice that the vanishing of $N(1)$ is exactly the
subject of the `Seiberg--Witten invariant conjecture', formulated
by L. Nicolaescu and the author, cf.
\cite{nemethi02:_seiber_witten,nemethi04:_seiber_witten}.

In particular, all superisolated hypersurface singularities which
are counter-examples for the Seiberg--Witten invariant conjecture
provide examples when the Reduces Identity (hence the Main
Identity too) is not valid. This cannot happen if $d\leq 4$, but
may appear starting from $d\geq 5$. The complete list of cases
when $N(t)$ is non-zero for $5\leq d\leq 6$, and more examples
with $d=7$ and even with arbitrary high $d$ are provided in
\cite[(2.7)-(2.8)]{MR2192386}. In all known cases (by the author)
$N(t)\not=0$ may occur only if the number $\nu$ of singular points
of $C$ is $\geq 2$.

In fact, in \cite{MR2192386} it is conjectured that $N(t)$ {\em
has only non-positive coefficients}. This conjecturally provides
a very strong criterion for the existence of cuspidal rational
planes curves with singularities with prescribed topological
types.

For unicuspidal curves (i.e. when $\nu=1$), in \cite{MR2192386} is
proved that $N(t)$ {\em has non-negative coefficients}. In
particular, the above conjecture about $N(t)$ transforms into the
conjecture which predicts the vanishing of $N(t)$. In fact, this
is also equivalent with the Seiberg-Witten invariant conjecture
for $(X,o)$. Its validity  was verified for all situation when the
logarithmic Kodaira dimension $\bar{\kappa}$ of
$\setC\setP^2\setminus C$ is $< 2$, and for $\bar{\kappa}=2$ in
all the examples known (by the authors of \cite{MR2192386}).

\begin{example}\label{graph}
Here is the case $C_4$ from \cite{SI} with $d=5$, where $C$ has
two singularities with multiplicity sequence $[3]$ and $[2_3]$.
The dual resolution graph of $f$ is presented below. Let
$\cU=\{C\}$, where $C$ is the $(-31)$-curve. In this case
$N(t)=-2t$, hence the Reduced Identity is not satisfied.

\begin{picture}(300,45)(20,0)
\put(125,25){\circle*{4}} \put(150,25){\circle*{4}}
\put(175,25){\circle*{4}} \put(200,25){\circle*{4}}
\put(225,25){\circle*{4}} \put(150,5){\circle*{4}}
\put(200,5){\circle*{4}} \put(100,25){\line(1,0){175}}
\put(150,25){\line(0,-1){20}} \put(200,25){\line(0,-1){20}}
\put(125,35){\makebox(0,0){$-2$}}
\put(150,35){\makebox(0,0){$-1$}}
\put(175,35){\makebox(0,0){$-31$}}
\put(200,35){\makebox(0,0){$-1$}}
\put(225,35){\makebox(0,0){$-3$}} \put(160,5){\makebox(0,0){$-4$}}
\put(210,5){\makebox(0,0){$-2$}} \put(100,25){\circle*{4}}
\put(250,25){\circle*{4}} \put(275,25){\circle*{4}}
\put(100,35){\makebox(0,0){$-2$}}
\put(250,35){\makebox(0,0){$-2$}}
\put(275,35){\makebox(0,0){$-2$}}
\end{picture}

%
\end{example}

\subsection{Example (a Newton non-degenerate
singularity)}\label{nn} Let $(X,o)$ be the zero set of the germ
$f=z_1^3+z_2(z_3^2+z_1^2z_2+z_2^6)$ (or any germ with the same
Newton diagram; in fact $f$ is a member of the bimodal singularity
$Q_{2,1}$). Let $\pi$ be its minimal resolution with dual graph:

\begin{picture}(200,50)(100,10)
\put(200,40){\circle*{4}}
\put(230,40){\circle*{4}}\put(230,20){\circle*{4}}
\put(260,40){\circle*{4}} \put(260,20){\circle*{4}}
\put(200,40){\line(1,0){90}}
\put(260,40){\line(0,-1){20}}\put(230,40){\line(0,-1){20}}
\put(200,50){\makebox(0,0){$-2$}}
\put(260,50){\makebox(0,0){$-2$}}
\put(290,50){\makebox(0,0){$-2$}}
\put(240,20){\makebox(0,0){$-2$}}\put(270,20){\makebox(0,0){$-5$}}
\put(223,33){\makebox(0,0){$E_1$}}
\put(140,30){\makebox(0,0){$\Gamma:$}}
 \put(230,50){\makebox(0,0){$-2$}}
\put(290,40){\circle*{4}}
\end{picture}

$\Gamma$ can be obtained, e.g., by a toric resolution described in 
\cite{MR894303}. By this, there is a 1-1 correspondence between
the faces of the Newton diagram and the nodes of $\Gamma$ (see
also \cite{BN2}). By this correspondence, the exceptional divisor
(node) $E_1$ corresponds to the face $\Delta$ determined by the
monomials of $z_2(z_3^2+z_1^2z_2+z_2^6)$. The positive primitive
normal vector of this face is $(5,2,6)$, whose entries agree with
the vanishing orders of the three coordinates on $E_1$.

The group $H$ is $\setZ_{12}$, if $\rho$ is a generator of
$\widehat{H}$ then the possible values $\rho([E^*_i])$ are given
as follows (where $\xi$ is a primitive 12--root of unity):

\begin{picture}(200,50)(100,10)
\put(200,40){\circle*{4}}
\put(230,40){\circle*{4}}\put(230,20){\circle*{4}}
\put(260,40){\circle*{4}} \put(260,20){\circle*{4}}
\put(200,40){\line(1,0){90}}
\put(260,40){\line(0,-1){20}}\put(230,40){\line(0,-1){20}}
\put(200,50){\makebox(0,0){$\xi^7$}}
\put(260,50){\makebox(0,0){$\xi^8$}}
\put(290,50){\makebox(0,0){$\xi^{10}$}}
\put(240,20){\makebox(0,0){$\xi$}}\put(270,20){\makebox(0,0){$\xi^4$}}
 \put(230,50){\makebox(0,0){$\xi^2$}}
\put(290,40){\circle*{4}}
\end{picture}

Our goal is to compare three series, namely $Z_0^\cU(t)$,
$P_0^\cU(t)$ (for $h=0$ and  $\cU=\{E_1\}$),  and $P^{\cG}(t)$
associated with the filtration $\cG$ constructed in (\ref{wh1}),
associated with weights ${\bf w}=(5,2,6)$, corresponding to the
face $\Delta$.

First, notice that $\Gamma$ is a minimally elliptic graph, and the 
resolution  is
minimal, hence $P(\bt)=Z(\bt)$ by (\ref{MTH}). In particular,
$P^\cU=Z^\cU$ too. $Z_0^\cU$ can be read from the graph, namely it
is the following Fourier--Dedekind sum (over the 12--roots $\xi$
of unity):
$$Z_0^\cU(t^{12})=\frac{1}{12}\sum_{\xi}\
\frac{(1-\xi^2t^{52})(1-\xi^8t^{40})}{(1-\xi^7t^{26})(1-\xi
t^{26})(1-\xi^{10}t^{20})(1-\xi^4t^8)}.$$ Hence, by a computation,
we get:
$$P_0^\cU(t)=Z_0^\cU(t)=\frac{(1-t^{15})(1-t^{12})}{(1-t^{13})(1-t^5)(1-t^2)(1-t^6)}.$$
Finally, notice that the series $P^\cG$ provided by the face
$\Delta: \ 5z_1+2z_2+6z_3=14$ is
$$P^\cG(t)=\frac{1-t^{14}}{(1-t^5)(1-t^2)(1-t^6)}.$$
This also shows that the divisorial filtration $\cF(l)$ (of $E_1$)
is strictly larger than the filtration $\cG(l)$ (for some $l$),
see (\ref{wh1}) for the terminology. Let us analyze the case
$l=12$ and 13 more closely. In the case of $\cG$ the weight 14 of
the equation is higher, so $Gr^\cG(l)$ is the same as the
corresponding vector spaces computed for the polynomial ring
$\cO$. We get a basis $\{z_3^2, z_3z_2^3,z_2^6,z_1^2z_2\}$ for
$Gr^\cG(12)$, and $\{z_1z_2z_3, z_1z_2^4\}$ for $Gr^\cG(13)$. On
the other hand, for $\cF$ we notice that along $E_1$, the
expression $h:=z_3^2+z_1^2z_2+z_2^6$ equals $z_1^3/z_2$ which has
order of vanishing 13. Hence $h$ provides a relation in
$Gr^\cF(12)=Gr^\cG(12)/(h=0)$, while $h$ is a new element in
$Gr^\cF(13)= Gr^\cG(13)\oplus \setC\langle h\rangle$. This fits
with the corresponding coefficients of the Poincar\'e series.

In particular, the `naive' series $P^\cG$ associated with a face
$\Delta $ of a Newton diagram does not agree (in general) with the
Poincar\'e series $P_0^\cU$ provided by the valuation of the
exceptional divisor corresponding to $\Delta$ (although they agree
for weighted homogeneous singularities). Moreover, although both
$P^\cG$ and $Z^\cU_0$ are topological/combinatorial, and both are
very natural, and one might hope that they should coincide, in
general, this is not the case. In general, $Z^\cU_0$ keeps more
information from the Newton boundary (from the complement of
$\Delta$ too). The author knows no direct formula of $Z_0^\cU$
directly from the combinatorics of the Newton diagram.

Finally notice that in this example the `leading term'
$z_2(z_3^2+z_1^2z_2+z_2^6)$ is not irreducible, causing the
anomaly $\cG\not=\cF^\cU$. This shows that the irreducibility
restriction in (\ref{wh4}) is necessary indeed.

\subsection{} In the next example we provide a general method
to find singularities with  $Z\not=P$. In fact, as this
construction suggests, the Main (Reduced) Identity may hold only
for very special, rigid analytic structures, and only for those
resolutions which `fit with some resolution  properties of the
analytic type' (see also (\ref{sq}) and (\ref{ex.7.new})).

\subsection{Example (for $Z\not=P)$}\label{me2}
Consider the following resolution graph $\Gamma$

\begin{picture}(300,45)(20,0)
\put(125,25){\circle*{4}} \put(150,25){\circle*{4}}
\put(175,25){\circle*{4}} \put(200,25){\circle*{4}}
\put(225,25){\circle*{4}} \put(225,5){\circle*{4}}
\put(100,25){\line(1,0){175}} \put(225,25){\line(0,-1){20}}
\put(125,35){\makebox(0,0){$-2$}}
\put(150,35){\makebox(0,0){$-2$}}
\put(175,35){\makebox(0,0){$-2$}}
\put(200,35){\makebox(0,0){$-2$}}
\put(225,35){\makebox(0,0){$-2$}} \put(210,5){\makebox(0,0){$-2$}}
\put(100,25){\circle*{4}} \put(250,25){\circle*{4}}
\put(275,25){\circle*{4}} \put(100,35){\makebox(0,0){$-3$}}
\put(250,35){\makebox(0,0){$-2$}}
\put(275,35){\makebox(0,0){$-2$}}\put(100,15){\makebox(0,0){$E_1$}}
\end{picture}

It is realized e.g. by  $\{z_1^2+z_2^3+z_3^{11}=0\}$, but one
might take any analytic structure supported by $\Gamma$. Notice
that $H=1$.

First we consider the (minimal) resolution $\pi$ with the above
graph. Since $\Gamma$ is minimally elliptic, $Z=P$ by (\ref{MTH}).
Next, we concentrate on the reduced case $\cU=\{E_1\}$. The
identity $Z^\cU=P^\cU$ can be verified independently of
(\ref{MTH}) as well: $Z^\cU_\pi$ equals with the right hand side
of (\ref{ZU}) by a computation; while $P^\cU_\pi$ follows from
(\ref{wh4}) with weights $(3,2,1)$. In fact, the point what we
wish to make, is not about the resolution $\pi$.

Let $\Gamma_b$ be a non-minimal graph obtained from $\Gamma$ by
blowing up a regular point of $E_1$, and let $E_2$ be the new
exceptional divisor. From topological point of view, all the
possible choices of the center of this blow up are equivalent. But
this is not the case from analytic point of view. In order to
explain this we need a definition: For any resolution $\phi$,
$Z_{max}$ denotes  the divisorial part of $\phi^*(m_{X,o})$, where
$m_{X,o}$ is the maximal ideal of $\cO_{X,0}$.

Let us analyze closer the resolution $\pi$. Since the singularity
is minimally elliptic one has $Z_{max}=Z_{min}$. But, since
$Z_{min}^2=-1$ and the multiplicity is 2, $\pi^*(m_{X,o})$ has a
unique base point $Q$; in fact,
$\pi^*(m_{X,o})=\cO(-Z_{min})\cdot m_Q$, where $m_Q$ is the
maximal ideal of $Q$ (see \cite{Laufer77}). Since $E_1$ is the
only component with $(Z_{min},E_1)$ negative, we get that $Q$ is
on the regular part of $E_1$.

Let $\pi^{gen}$ (respectively $\pi^Q$) be the resolution obtained
from $\pi$ by blowing up a generic point ($\not=Q$) of $E_1$
(respectively $Q$).  In the first case $Z_{max}=Z_{min}$, while in
the second case $Z_{max}=Z_{min}+E_2$. Notice that the
multiplicity of $E_2$ in $Z_{min}$ is 1.

Therefore, considering the reduced situation for the graph
$\Gamma_b$ and $\cU=\{E_2\}$, we get the following: the
coefficient of $t$ in the series $P^{E_2}_{\pi^{gen}}(t)$  is
\emph{non-zero}, while in   $P^{E_2}_{\pi^Q}(t)$ is 0. Since
(computed in $\Gamma_b$)
$$Z^{E_2}(t)=\frac{t^6-1}{(t^3-1)(t^2-1)^2},$$
we get that $Z^{E_2}_{\pi^{gen}}\not=P^{E_2}_{\pi^{gen}}$. In
fact, one can show that the divisorial filtration associated with
$E_1$ in $\pi$ agrees with the divisorial filtration associated
with $E_2$ in $\pi^{gen}$, hence $P^{E_2}_{\pi^{gen}}=P^{E_1}_\pi$
(which agrees with the expression from (\ref{ZU})).


\begin{example}\label{ex.7.new}
Let us analyse the above examples from the point of view of
splice-quotient
singularities. In the resolution $\pi$ the minimally elliptic
structure is compatible with the splice-quotient analytic structure 
constructed from the graph of $\pi$ (they are equisingular). On the
other hand, on $\pi^{gen}$, there are two analytic structures 
(both minimally elliptic), which are `not compatible', they are
`different'.
The first is the pull-back of the splice-quotient analytic structure 
constructed by the graph of $\pi$, the other is the splice-quotient
structure constructed in $\pi^{gen}$. The point is that the first is
not even splice-quotient in $\pi^{gen}$ (e.g., it does not admit all
the `end curves' by its very construction). 
The splice-quotient analytic structure of $\pi^{gen}$ is the same
with a pulled back splice-quotient if and only if the blow up happened
at the base point, i.e. in a very special point. 

Hence, blowing up in a
{\em wrong point} a splice-quotient singularity we can get a
non-splice-quotient one. 

The same is true for the property $Z=P$.
\end{example}

\begin{example} The interested reader may construct (or find in
the literature) easily pairs of analytic structures of
singularities supported by the same topological type, and fixed
resolution graphs (even good minimal ones) such that the two
cycles $Z_{max}$ are nor equal. By the same argument as above, we
get that the Main Identity cannot hold for both analytic
structures.

Notice that analyzing the maximal ideal cycle as above we target
the very first coefficient of $Z(t)$. Evidently, there are many
more subtle analytic information (e.g. basepoints of other linear
systems)  which may obstruct the identity $Z=P$.
\end{example}


\section{The Relative Identities}\label{rel}

\subsection{The set-up.}\label{setuprel}
Let $(X,o)$ be a normal surface singularity and $(C,o)\subset
(X,o)$ be a reduced curve-germ on it. We fix a good embedded
resolution $\pi:\widetilde{X}\to X$ of $C\subset X$, which
satisfies the following restriction:
\begin{equation}\label{restr}\left\{
\begin{array}{l} \mbox{$\pi$ has no irreducible exceptional divisor
intersected by}\\ \mbox{two different components of the strict
transform of $C$.}\end{array}\right.
\end{equation}
 $L$ and $L'$
denote the corresponding lattices associated with $\pi$.
Furthermore, we consider the universal abelian cover $c:(Y,o)\to
(X,o)$ and the maps $\widetilde{c}$ and $\pi_Y$ as in
(\ref{setup}). For any $j\in \cV$, $\{k(j)\}$ stays for the index
set of the irreducible exceptional divisors of $\pi_Y$ which are
projected by $\widetilde{c}$ onto $E_j$.

Let $\widetilde{C}$ be the $\pi$-strict transform of $C$,
$\cU^C\subset \cV$ be the set of those irreducible exceptional
divisors which intersect $\widetilde{C}$, and $\widetilde{C}_j$
($j\in\cU^C$) the component of $\widetilde{C}$ which intersects
$E_j$. The irreducible components of
$\widetilde{c}^{-1}(\widetilde{C}_j)$ are denoted by
$\{\bar{C}_{j,\alpha}\}_\alpha$.

\begin{definition}\label{filrel}
We define the following filtration of $\cO_{Y,o}$, associated with
$\pi$ and the pair $(C,o)\subset (X,o)$, and indexed by $l'\in
L'$:
\begin{equation}\label{FIL}
\cF^C(l'):=\Big\{f\in \cO_{Y,o}\ : \
\begin{array}{ll}
div(f\circ \pi_Y)_{k(j)}\geq \widetilde{c}^*(l')_{k(j)} &
\mbox{ for any $k(j)$ with $j\not\in \cU^C$}\\
(div(f\circ \pi_Y)- \widetilde{c}^*(l'),\bar{C}_{j,\alpha})\geq 0
& \mbox{ for any $\alpha$ with $j\in \cU^C$}
\end{array}\Big\}.
\end{equation}
Above, in the second line, the intersection multiplicity
$(\cdot,\cdot)$ takes values in $\setN\cup\infty$,
$(D,\bar{C}_{j,\alpha})$ being $\infty$ if $D$ contains
$\bar{C}_{j,\alpha}$ as a component.
\end{definition}
\noindent (Above, for $j\in \cU^C$, even if along the exceptional
divisor the divisor  $div(f\circ \pi_Y)$ is not larger than $\widetilde{c}^*(l')$,
its intersection multiplicity with $\bar{C}_{j,\alpha}$ can be
larger if one of its  non-compact components has a  large 
intersection multiplicity.) 

Similarly as in the non-relative version, one denotes by
$\hh^C_h(l')$, ($[l']=h$) the dimension of the
$\theta(h)$-eigenspace of $\cO_{Y, o}/\cF^C(l')$, and one defines
$H^C$, $L^C$ and $P^C$ by similar formulas as in the non-relative
case. E.g.:
\begin{equation}\label{eq:4C}
P^C(\bt)=-H^C(\bt)\cdot \prod_j(1-t_j^{-1}) \ \ \mbox{in
$\setZ[[\bt^{L'_e}]]$}.
\end{equation}

In the combinatorial setting one does the following modification.
Define $\delta^C_j$ as $\delta_j$ for any $j\not\in\cU^C$,
otherwise set $\delta^C_j:=\delta_j+1$. Then one defines $\zeta^C$
and $Z^C$ similarly as the corresponding non-relative invariants,
but using $\delta^C_j$ instead of $\delta_j$. By the same argument
as in (\ref{TH1}), the identity  $\zeta^C=Z^C$  follows.

\begin{definition}\label{def:rel}
We say that the pair $(C,o)\subset (X,o)$ and the resolution $\pi$ satisfies the
{\bf Relative Identities} if
$$Z^C(\bt)=P^C(\bt).$$
\end{definition}

\begin{theorem}\label{TH:rel}
Assume that $(X,o)$ and \underline{all}  its good resolutions
satisfy the `Main Identities' (\ref{MI}). Then, for any curve-germ
$(C,o)\subset (X,o)$ and good resolution $\pi$ as in
(\ref{setuprel}), the `Relative Identities' are satisfied too.
\end{theorem}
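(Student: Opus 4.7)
The plan is to deduce the Relative Identity $Z^C(\bt)=P^C(\bt)$ from the (non-relative) Main Identity for $\pi$ (plus, if necessary, an auxiliary good resolution provided by the hypothesis) by establishing a pair of parallel multiplicative factorizations on the topological and analytic sides. Concretely, I would aim to prove
\[
 Z^C(\bt)=Z(\bt)\cdot\prod_{j\in\cU^C}\bigl(1-\bt^{E^{*}_j}\bigr), \qquad P^C(\bt)=P(\bt)\cdot\prod_{j\in\cU^C}\bigl(1-\bt^{E^{*}_j}\bigr).
\]
Combined with the Main Identity $Z(\bt)=P(\bt)$ for $\pi$ (which holds by hypothesis since $\pi$ is one of the good resolutions of $X$), the Relative Identity is immediate.

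The combinatorial factorization is essentially by inspection. Since $\delta^C_j=\delta_j+1$ exactly for $j\in\cU^C$ and all other valencies are unchanged, the polynomial $z^C(\bx)=\prod_j(1-x_j)^{\delta^C_j-2}$ differs from $z(\bx)$ by the factor $\prod_{j\in\cU^C}(1-x_j)$. Applying $\Phi$ in the variables $x_j=\bt^{E^{*}_j}$ (and using that $\Phi$ is a ring homomorphism from $\setZ[[\bx]]$ into $\setZ[[\bt^{\cS'}]]$, cf.\ (\ref{varx})) gives the stated factorization of $Z^C$.

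The analytic factorization, via $P=-H\prod_j(1-t_j^{-1})$ and $P^C=-H^C\prod_j(1-t_j^{-1})$, reduces to the Hilbert-series identity $H^C(\bt)=H(\bt)\cdot\prod_{j\in\cU^C}(1-\bt^{E^{*}_j})$, i.e.\ to the inclusion-exclusion
\[
 \hh^C_{[l']}(l')=\sum_{S\subseteq \cU^C}(-1)^{|S|}\,\hh_{[l'-E^*_S]}(l'-E^*_S),\qquad l'\in L'.
\]
The strategy here is to compare the filtration $\cF^C$ with $\cF$ one index $j\in\cU^C$ at a time. By condition (\ref{restr}) each $E_j$ meets at most one branch of $\widetilde{C}$, so the ``intersection'' conditions defining $\cF^C$ at the indices $j\in\cU^C$ are independent of one another and of the divisorial conditions at $j\notin\cU^C$. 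For each such $j$ one picks an element $f_j\in\cO_{Y,o}$ adapted to the pull-back $\widetilde{c}^{-1}(\widetilde{C}_j)$, and checks that multiplication by $f_j$ realizes a natural identification between the quotient ``$\cF^{C\text{-intersection at }j}/\cF^{\text{divisorial at }j}$'' and $\cO_{Y,o}/\cF(l'-E^*_j)$ (as $H$-modules, with the correct character shift by $[E^{*}_j]$). Assembling these for varying $S\subseteq\cU^C$ into a Koszul-type complex and taking alternating dimensions of the $\theta(h)$-eigenspaces produces the desired inclusion-exclusion; the equivariance is tracked through Proposition~\ref{fact}(\ref{Fact1}), which already gives the required eigenspace bookkeeping on the sheaf-theoretic side via Corollary~\ref{4.cor.new}.

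The main obstacle I expect is Step 2: setting up the Koszul-type comparison so that it correctly accounts for the $H$-equivariance (each summand $\hh_{[l'-E^*_S]}(l'-E^*_S)$ lives in a different $\widehat{H}$-eigenspace, shifted by $\sum_{j\in S}[E^{*}_j]$), and in the multi-branch case ($|\cU^C|\ge 2$) verifying exactness of the Koszul complex. If the direct construction proves awkward, the blanket hypothesis ``all good resolutions of $X$ satisfy the Main Identity'' gives a back-up: one applies the Main Identity to the good resolution $\pi^+$ obtained from $\pi$ by blowing up each point $\widetilde{C}_j\cap E_j$, combines it with the Main Identity for $\pi$ (to which Theorem~\ref{MR} also applies, yielding $P_{\pi^+}|_{s=1}=P_\pi$), and extracts the multiplicative factor $\prod_{j\in\cU^C}(1-\bt^{E^{*}_j})$ by comparing the two. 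Either way, Step~3 is purely formal once Step~2 is in hand.
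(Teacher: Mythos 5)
Your primary plan — deducing the Relative Identity from the factorizations $Z^C=Z\cdot\prod_{j\in\cU^C}(1-\bt^{E^*_j})$ and $P^C=P\cdot\prod_{j\in\cU^C}(1-\bt^{E^*_j})$ — has a serious gap on the analytic side. The combinatorial factorization $Z^C=Z\cdot\prod_{j\in\cU^C}(1-\bt^{E^*_j})$ is correct and immediate from $\delta^C_j=\delta_j+1$ for $j\in\cU^C$. But the analytic factorization $H^C=H\cdot\prod_{j\in\cU^C}(1-\bt^{E^*_j})$ is \emph{not} a general identity that can be proved by a Koszul-type inclusion-exclusion on the filtration level: it would make $P^C$ independent of the curve $C$ (given the graph and $\cU^C$), which is false. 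Example~\ref{EXMinEll} exhibits, on the same minimally elliptic $(X,o)$ and in the same resolution, two curves $C$ and $C'$ with identical embedded topological type (hence identical $Z^C=Z^{C'}$) but $P^{C,\cU}\neq P^{C',\cU}$. If your factorization were a consequence of a Koszul comparison of filtrations on a fixed resolution, it would apply to both $C$ and $C'$ and force $P^C=P^{C'}$. The point is that the condition in $\cF^C$ at $j\in\cU^C$ is an intersection-multiplicity condition with the strict transform of $C$, which is genuinely weaker than the divisorial condition when the strict transform of $\{f=0\}$ has high contact with $C$, and the resulting discrepancy is a nontrivial analytic invariant that no character-shift/eigenspace bookkeeping on the fixed resolution $\pi$ will eliminate. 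Your argument never invokes the hypothesis that \emph{all} good resolutions satisfy the Main Identity, yet that hypothesis is essential (and is exactly what fails in the minimally elliptic example, cf.\ \ref{me2}).

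Your back-up plan is closer to the paper's actual proof, but is still off quantitatively: blowing up \emph{once} at each $\widetilde{C}_j\cap E_j$ is not enough. The paper blows up $m$ times at each such point, producing, for each $j\in\cU^C$, a chain of $(-2)$-curves terminating in a $(-1)$-curve $E_{jm}$, and then argues via an approximation ``$\equiv^n$''. The reason a fixed $m$ (in particular $m=1$) cannot work is precisely the high-contact phenomenon above: for any fixed number of blow-ups one can find $f$ whose strict transform still meets $\bar{C}_{j,\alpha}$, so that the relative filtration $\cF^C_\pi(l')$ and the divisorial filtration $\cF^{\bar\cU}_{\bar\pi}(\bar l')$ still differ. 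The paper's remedy is to fix a bound $n$ on the exponents under consideration, take $m=m(n)$ large so that (i) $\cF^C_\pi(l')=\cF^{\bar\cU}_{\bar\pi}(\bar l')$ for all $l'$ with $l'_j\le n$, and (ii) $-(E^*_{jm},E^*_{jm})>n$, then deduce $P^C_\pi\equiv^n P^{\bar\cU}_{\bar\pi}= Z^{\bar\cU}_{\bar\pi}\equiv^n Z^C_\pi$ using the Main Identity for $\bar\pi$ (via Theorem~\ref{MR}) in the middle step, and finally let $n\to\infty$. So the hypothesis on all good resolutions is used on an unbounded family of auxiliary resolutions $\bar\pi=\bar\pi(n)$, not on a single $\pi^+$.
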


\begin{proof} By the definitions, $\cF(l')\subset \cF^C(l')$ for any $l'\in L'$.
Moreover, if $f\in \cF^C(l')$, then either $f\in\cF(l')$, or the $\pi_Y$-strict transform of
$\{f=0\}$ intersects $\widetilde{c}^{-1}(\widetilde{C})$.

We fix an integer $n>0$, and we will focus on those coefficients
of $P^C(\bt)=\sum_{l'}\pp^C(l')\bt^{l'}$ for witch $l'_j\leq n$ for
any $j\in\cV$. For two series $A(\bt)=\sum a(l')\bt^{l'}$ and
$B(\bt)=\sum b(l')\bt^{l'}\in \setZ[[\bt^{L'_e}]]$ we write
$A\equiv^n B$ if $a(l')=b(l')$ for all $l'\in L'$ with $l'_j\leq
n$ for all $j$. Our goal is to show that  $P^C\equiv^n Z^C$ for
any arbitrarily chosen $n$.

Let $\bar{\pi}$ be the resolution obtained from $\pi$ by blowing
up the intersection of any strict transform component
$\widetilde{C}_j$ $(j\in \cU^C)$, with the exceptional divisor in
$m>0$ infinitezimally closed points. In this way we introduce, for
each $j\in\cU^C$, a string of new irreducible exceptional divisors
$E_{j1},\ldots, E_{jm}$, where all of them are $(-2)$ curves
excepting the last one $E_{jm}$ which is a $(-1)$ curve, $E_{j1}$
intersects $E_j$ and $\delta_{jm}=1$. Let $\bar{\cV}$ be the set
of irreducible exceptional divisors of $\bar{\pi}$, let $\cE$ be
the union of components of type $E_{jm},\ (j\in\cU^C)$, and
finally $\bar{\cU}:= (\cV\setminus \cU^C)\cup \cE\subset
\bar{\cV}$.

In the next paragraphs we will compare $\cF^C$ (associated with
$\pi$) with $\cF^{\bar{\cU}}$ (associated with $\bar{\pi}$). There
is a natural identification of their $t$-variables: the variables
corresponding to $\cV\setminus \cU^C=\bar{\cU}\setminus  \cE$ are
identified,  and also for any $j\in\cU^C$ the variable
corresponding to $E_j$ with the variable which corresponds to
$E_{jm}$. According to this correspondences, for any
$l'=\sum_jl'_jE_j\in L'$ we write
$$\bar{l}'=\sum_{j\in \cV\setminus \cU^C}l'_jE_j+\sum_{j\in \cU^C} l'_jE_{jm}\in \bar{L}'.$$
Notice also that $[E^*_{jm}]=[E^*_j]$ in $H$ for any $j\in \cU^C$.

Next, we formulate two  properties used later in the proof. (1)
uses the numerical behaviour of the intersection multiplicity with
respect to a blow up (of a smooth point), and the fact that the
strict transforms of $f$ and $C$ after sufficiently many blow ups
can be separated. (2) follows  from standard  arithmetical
properties of negative definite graph. Their detailed verification
is left to the reader. The statements are: for any fixed $n>0$,
there exists an integer $m(n)>0$ such that $\bar{\pi}$ constructed
with $m\geq m(n) $ satisfies the following two properties:
\begin{enumerate}
\item\label{egym}
 for any $l'\in L'$
with $l'_j\leq n$ ($j\in\cV$) one has $\cF^C_\pi(l')=\cF_{\bar{\pi}}^{\bar{\cU}}(\bar{l}')$;
\item\label{kettom}
$-(E^*_{jm},E^*_{jm})_{\bar{\pi}}>n$.
\end{enumerate}
For any $n$, fix such a $\bar{\pi}$. Then, by (\ref{egym}),  $P^C_\pi\equiv^n P^{\bar{\cU}}_{\bar{\pi}}$.
But $P^{\bar{\cU}}_{\bar{\pi}}= Z^{\bar{\cU}}_{\bar{\pi}}$ by the assumption that $(X,o)$ satisfies  the
main identities for $\bar{\pi}$, hence by (\ref{MR}) it satisfies the reduced identities too.
But $Z^{\bar{\cU}}_{\bar{\pi}}$ and $Z^C_\pi$ differ only by the contribution given by
$E_{jm}$ (in the first one), hence using (\ref{kettom}) and the fact that all these series are in
$\setZ[[\bt^{L'_e}]]$, we have
 $Z^{\bar{\cU}}_{\bar{\pi}}\equiv^n Z^C_\pi$.
In particular, $P^C\equiv^n Z^C$ for any $n$, hence they are equal.
\end{proof}

The next fact follows from (\ref{MTH}) and (\ref{TH:rel}). It
generalizes the main result of \cite{CDGc} (here we have a variable
even for $j\not \in\cU^C$; see also (\ref{RRR})).

\begin{corollary}\label{relrac}
Any curve $(C,o)$ on a rational surface singularity $(X,o)$, and
any resolution $\pi$ satisfy the Relative Identities.
\end{corollary}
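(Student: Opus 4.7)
The statement is essentially a direct combination of the two preceding theorems, so the proof proposal is very short.

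First I would invoke Theorem \ref{MTH}(\ref{rat}), which asserts that every rational surface singularity $(X,o)$ satisfies the Main Identity $Z(\bt)=P(\bt)$ for \emph{every} good resolution. This is precisely the hypothesis required by Theorem \ref{TH:rel}.

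Next, given any curve-germ $(C,o)\subset (X,o)$ and any good embedded resolution $\pi$ satisfying the restriction \eqref{restr}, I would apply Theorem \ref{TH:rel} directly. The conclusion $Z^C(\bt)=P^C(\bt)$ then follows, which is the content of the Relative Identity (\ref{def:rel}).

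The only thing one needs to double-check (and which is implicit in the statement of Theorem \ref{TH:rel}) is that the auxiliary resolution $\bar{\pi}$ constructed inside the proof of \ref{TH:rel} (obtained from $\pi$ by blowing up smooth points on the strict transform $\widetilde{C}_j$, introducing the $(-2)$-strings capped with a $(-1)$-curve $E_{jm}$) is again a good resolution of the rational singularity $(X,o)$. This is immediate since blowing up smooth points of $\widetilde{X}$ preserves goodness of the resolution, and the underlying singularity $(X,o)$ is unchanged. Therefore Theorem \ref{MTH}(\ref{rat}) applies to $\bar{\pi}$, giving $Z_{\bar{\pi}}=P_{\bar{\pi}}$, and by Theorem \ref{MR} also the reduced identity $Z^{\bar{\cU}}_{\bar{\pi}}=P^{\bar{\cU}}_{\bar{\pi}}$ used in the proof of \ref{TH:rel}.

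In short, there is no new obstacle: the corollary is obtained by chaining \ref{MTH}(\ref{rat}) $\Rightarrow$ hypothesis of \ref{TH:rel} $\Rightarrow$ conclusion of \ref{TH:rel}. The only conceptual point worth emphasizing in the write-up is why the rationality hypothesis is robust under the auxiliary blow-ups used in the proof of \ref{TH:rel} — namely that rationality is a property of $(X,o)$, not of any particular resolution, so all good resolutions $\bar{\pi}$ obtained by further blow-ups from $\pi$ automatically satisfy the Main Identity.
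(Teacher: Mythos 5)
Your proposal is correct and matches the paper's own argument, which explicitly states that the corollary follows from combining Theorem \ref{MTH} (giving the Main Identity for all good resolutions of a rational singularity) with Theorem \ref{TH:rel}. The extra remark you add about rationality being preserved under the auxiliary blow-ups $\bar{\pi}$ is a reasonable sanity check, though the paper implicitly takes this for granted.
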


\begin{remark}\label{ME}
The analog of (\ref{relrac}) for minimal elliptic singularities is
not true, see (\ref{EXMinEll}).
\end{remark}

\section{The Reduced Relative Identities}

\subsection{}\label{setupRRI}
Assume that we are in the situation of (\ref{setuprel}). By a
similar construction as in \S \ref{sec:RI} we can reduce the
number of variables, and we can formulate the `Reduced Relative
Identities' for any non-empty subset $\cU\subset\cV$. Moreover,
repeating the proof of (\ref{MR}) (combined with (\ref{TH:rel}))
we get

\begin{proposition}\label{PPP}
If the Main Identities are satisfied for $(X,o)$ and for all the
resolutions $\pi$, then the Reduced Relative Identities are
satisfied for any $(C,o)\subset (X,o)$ and $\cU\subset \cV$ as well.
\end{proposition}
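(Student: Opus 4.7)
The plan is to combine Theorem \ref{TH:rel} with the reduction-of-variables argument of Theorem \ref{MR}. Under the hypothesis, Theorem \ref{TH:rel} applied to $(X,o)$ yields the full (non-reduced) Relative Identity $Z^C(\bt) = P^C(\bt)$ for the fixed resolution $\pi$. On the combinatorial side, the definition of $Z^{C,\cU}$ via the modified valencies $\delta^C_j$ gives immediately $Z^{C,\cU}(\bt) = Z^C(\bt)|_{t_j = 1,\ j \notin \cU}$, exactly as in the non-relative reduction (\ref{zeta2r2}). Hence the proposition reduces to the purely analytic statement
\[
P^{C,\cU}(\bt) = P^C(\bt)\big|_{t_j = 1,\ j \notin \cU}.
\]

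I would prove this last identity by descending induction on $|\cU|$, transcribing the argument of Theorem \ref{MR}. For the inductive step with $\bar\cU = \cU \setminus \{i_0\} \neq \emptyset$, the relation (\ref{eq:4C}) reduces the claim to
\[
L^{C,\cU}(\bt)\,(t_{i_0} - 1)\big|_{t_{i_0} = 1} = L^{C,\bar\cU}(\bt),
\]
and after expanding $L^{C,\cU}(\bt)\cdot \bt^{-pr_\cU(r_h)}$ as a double series in $\bar l \in L_{\bar\cU}$ and $k \in \setZ$, this in turn reduces to two stabilization properties of the coefficient slices $a^{C,\cU}(\bar l + k E_{i_0})$: namely, equality with $a^{C,\bar\cU}(\bar l)$ for $k \ll 0$, and vanishing for $k \gg 0$.

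The main obstacle compared with Theorem \ref{MR} is the hybrid structure of $\cF^C$: at components $j \in \cU^C$ the filtration is governed by intersection multiplicities with the non-compact strict-transform curves $\bar C_{j,\alpha}$ rather than by pure divisorial inequalities. Both stabilizations nevertheless survive. For $k \ll 0$, the constraint at the $E_{i_0}$-slot of $\cF^{C,\cU}(\bar l + kE_{i_0})$ --- divisorial if $i_0 \notin \cU^C$, an intersection inequality with $\bar C_{i_0,\alpha}$ if $i_0 \in \cU^C$ --- becomes vacuous for every $f \in \cO_{Y,o}$, so the graded piece collapses to that of $\cF^{C,\bar\cU}(\bar l)$. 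For $k \gg 0$, the finiteness argument of \ref{MR} (based on negative-definiteness of the intersection form and the structure of the Lipman cone) still applies, supplemented in the case $i_0 \in \cU^C$ by the fact that $(\widetilde c^*(kE_{i_0}), \bar C_{i_0,\alpha})$ grows linearly in $k$; this forces the $E_{i_0}$-constraint to saturate relative to the constraint at $E_\cU$, making the graded piece vanish. Once these two checks are carried out, the rest of the reduction proceeds verbatim as in Theorem \ref{MR}, and combining with $Z^C = P^C$ and $Z^{C,\cU} = Z^C|_{t_j=1,\ j\notin\cU}$ yields $Z^{C,\cU} = P^{C,\cU}$.
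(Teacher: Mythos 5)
Your proposal is correct and follows essentially the same route as the paper, which disposes of the proposition in a single sentence by invoking Theorem~\ref{TH:rel} (to pass from the hypothesis to the full Relative Identity $Z^C=P^C$) and then ``repeating the proof of~(\ref{MR})'' to reduce variables. You have simply unpacked that terse instruction, and your two stabilization checks --- vacuity of the $E_{i_0}$-constraint for $k\ll 0$ and saturation (via the Lipman cone and the linear growth of $(\widetilde c^*(kE_{i_0}),\bar C_{i_0,\alpha})$) for $k\gg 0$ --- are precisely the points at which the hybrid divisorial/intersection form of $\cF^C$ could have caused trouble, and they hold as you argue.
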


In the sequel,  $\bt$ denotes the multivariable $\{t_j\}_{j\in
\cU}$.

In this section we consider that case when
$\cU$ is exactly the subset $\cU^C$ defined in (\ref{setuprel}),
namely, is the index set of irreducible exceptional divisors
intersected by the strict transform of $C$. As we will see, for
this special $\cU$, both invariants $Z^{C,\cU}$ and $P^{C,\cU}$
are {\em independent of the resolution}, and can be related  with
important classical invariants.

\subsection{The topological part.}\label{toppart} Let $K_C:=C\cap \Sigma$
be the link of $C$ in the link $ \Sigma$ of $X$. Set $G:=
H_1(\Sigma\setminus K_C,\setZ)$. Then $G/Tors(G)$ is isomorphic
with $\setZ^r$, where $r$ is the number of irreducible components
of $C$ ($=|\cU^C|$). Let $\tau:G\to \setZ^r$ be this natural
morphism. Moreover, for any character $\rho\in \widehat{H}$, let
$\rho^e\in \widehat{G}$ be its extension  via the natural map
$G\to H$ induced by the inclusion.

Then, the Milnor-Reidemeister torsion \cite{Milnor66} of
$\Sigma\setminus K_c$ associated with the representation
$(\tau,\rho^e)$ can be identified (up to the natural ambiguities
of the torsion) with
\begin{equation}\label{torsion}
\widehat{Z}_\rho(\bt)=\prod_{j\in
\cV}(1-\rho([E_j^*])\bt^{pr_\cU(E^*_j)})^{\delta_j^C-2},
\end{equation}
where this expression  is written in terms of a
resolution $\pi$ which satisfies (\ref{restr}). (Since this fact
will be not used later, we omit its proof.) As such, {\em
$\widehat{Z}_\rho(\bt)$ depends only on the embedding $K_C\subset
\Sigma$ and on $\rho$, and it is independent of the resolution
chosen} (a fact, which can be verified by a direct computation as
well).

E.g., when $\Sigma$ is an integral homology sphere, then $\rho=1$,
and $\widehat{Z}_1(\bt)$ is the Alexander polynomial of
$K_C\subset \Sigma$, see \cite{EN}.

Notice that for any $h\in H$ one has
$$Z^C_h(\bt)=\frac{1}{|H|}\ \sum_{\rho}\
\rho(h)^{-1}\, \widehat{Z}_\rho(\bt).$$ Therefore, $\{Z^C_h\}_h$
and $\{\widehat{Z}_\rho\}_\rho$ correspond by Fourier transform.

\begin{example} There is an important specialization of
$\widehat{Z}_\rho(\bt)$. Assume that $f:(X,o)\to(\setC,o)$ is a
non-zero analytic function-germ. For simplicity, we will assume
that it defines an isolated singularity. Set $C:=\{f=0\}$. Let
$\pi$ be an embedded good resolution of $C\subset X$ with
(\ref{restr}), and denote by $m_j$ the order of vanishing of
$f\circ \pi$ along $E_j$ ($j\in\cV$). Since
$(div(f\circ\pi),E^*_j)=0$, we get that
$m_j=-\sum_{i\in\cU}(E^*_i,E^*_j)$, or in other words $t^{m_j}$
can be obtained by specialization
\begin{equation}\label{sub}
t^{m_j}=\bt^{E^*_j}|_{t_i\mapsto t\,\mbox{\tiny{for
all}}\,i\in\cU}.
\end{equation}
Consider the Milnor fibration $\arg(f):\Sigma\setminus K_C\to S^1$
of $f$ with Milnor fiber $F$. Let $\setC_\rho$ be the flat-bundle
associated with representation $\rho$ above $\Sigma$. Then on can
consider the algebraic monodromy action induced by the Milnor
fibration on $H_*(F,\setC_\rho)$ (with twisted coefficients), let
$\zeta_\rho(t)$ be its zeta function. Then, one can prove
(similarly as A'Campo's formula for the usual monodromy
zeta-function, see \cite{A'C}) that
$$\zeta_\rho(t)=\widehat{Z}_\rho(\bt)|_{t_i\mapsto t\,\mbox{\tiny{for
all}}\,i\in\cU}.$$ In particular, if $C$ is irreducible, then
$\zeta_\rho(t)=\widehat{Z}_\rho(t)$.  If $\rho=1$, then
$\zeta_\rho$ is just the usual monodromy zeta function of $f$.
\end{example}

\subsection{The analytical part.} Consider the setup of
(\ref{setupRRI}). Set $C^Y:=c^{-1}(C)$. Then $c:C^Y\to C$ is a
branched covering (regular over $C\setminus o$) with Galois group
$H$.

First,  we show that {\em $P^{C,\cU}$ depends only on the covering
$c:C^Y\to C$}. Indeed,  for any $l'\in pr_\cU(L')$, (\ref{FIL})
reads as
\begin{equation}\cF^{C,\cU}(l'):=\big\{f\in
\cO_{Y,o}\ : \  (div(f\circ \pi_Y),\bar{C}_{j,\alpha})\geq l'_j\
\mbox{ for any $\alpha$ and $j\in \cU$}\big\}.
\end{equation}
Therefore, if $f\in\cI(C^Y)$, the ideal of $C^Y$ in $\cO_{Y,o}$,
then $  (div(f\circ \pi_Y),\bar{C}_{j,\alpha})=\infty$ for any
component $\bar{C}_{j,\alpha}$, hence $\cI(C^Y)\subset
\cF^{C,\cU}(pr_\cU(l'))$ for any $l'\in L'$.
 In particular, any of the series $H$, $L$ or $P$ associated with
 the pair $(C,\cU)$ depends only on the induced  filtration of
 $\cO_{C^Y}=\cO_{Y,o}/\cI(Y^C)$ and the action of $H$ on it.
On the other hand, the filtration also can be recovered
intrinsically from $C^Y$. Let $\{C_j\}_{j\in \cU}$ be the
irreducible decomposition of $C$, let $\{C_{j,\alpha}\}_\alpha$ be
the irreducible decomposition of $c^{-1}(C_j)$. Notice that the
restriction of $\pi_Y$ to $\bar{C}_{j,\alpha}$ is, in fact,  the
normalization $n_{j,\alpha}$ of $C_{j,\alpha}$. We fix a local
coordinate $s_{j,\alpha}$ in $\bar{C}_{j,\alpha}$, and we define
the valuation $v_{j,\alpha}$ on $\cO_{C^Y}$ by $v_{j,\alpha}(f)$
being the multiplicity of $n_{j,\alpha}\circ f$, as a power series
in $s_{j,\alpha}$. Then, for any $l'\in pr_\cU(L')$, one has
\begin{equation}\cF^{C,\cU}(l'):=\big\{f\in
\cO_{C^Y}\ : \  v_{j,\alpha}(f) \geq l'_j\ \mbox{ for any $\alpha$
and $j\in \cU$}\big\}.
\end{equation}
For any $l'\in pr_\cU(L')$ and  $h\in H$,  let $\hh^{C,\cU}_h(l')$
be the dimension of the $\theta(h)$-eigenspace of
$(\cO_{C^Y}/\cF^{C,\cU}(l'))_{\theta(h)}$. Then
$$H^{C,\cU}_h(\bt)=\sum_{l'\in pr_\cU(L')}\hh^{C,\cU}_h(l')\cdot
\bt ^{l'}.$$ One can describe $L^{C,\cU}_h$ and $P^{C,\cU}_h$
similarly.

If $h=0$, then the situation is more simpler: $H^{C,\cU}_0$,
$L^{C,\cU}_0$ and $P^{C,\cU}_0$ depend only on the curve $C$.
Indeed, let $v_j$ be the valuation of $\cO_C$ defined via the
normalization $n_j:\bar{C}_j\to C_j$. Then, for any $l\in
L_\cU:=pr_\cU(L)$ set
$$\cF(l):=\big\{f\in
\cO_{C}\ : \  v_{j}(f) \geq l_j\ \mbox{ for any $j\in
\cU$}\big\}.$$ Then
$$H^{C,\cU}_0(\bt)=\sum _{l\in L_\cU} \ \dim(\cO_C/\cF(l))\cdot
\bt^l.$$ 
\begin{example}\label{A3}
The covering $c:C^Y\to C$, in general, is not trivial. Moreover,
$c$ cannot be recovered form the curve $C$ and the abstract group
$H$ only:  the covering $c:C^Y\to C$  preserves some information
from the universal abelian cover $c:Y\to X$. E.g., let $(X,o)$ be
the hypersurface singularity given by $\{x^2+y^2-z^4=0\}$, and $C$
be the smooth curve on $X$ given by $\{x=y-z^2=0\}$. (If $\pi$ is
the minimal resolution of $X$, then $\pi^{-1}(C)$ is a normal
crossing divisor, and the strict transform of $C$ intersects the
component with $\delta_j=2$.) Then $c:C^Y\to C$ corresponds to the
representation $\pi_1(C\setminus o)=\setZ\to \setZ_4$ given by
$1\mapsto \hat{2}$.
\end{example}

\subsection{}\label{statement} {\bf The identity} $Z^{C,\cU}(\bt)=P^{C,\cU}(\bt)$ (or any
of its reduced identities) {\em is hardly non-trivial, and, in
fact, is rather special and  surprising if it holds}.

Nevertheless,  (\ref{MTH}) and (\ref{PPP}) provide the next result
for rational singularities (which is the main result of
\cite{CDGc}):

\begin{corollary}\label{RRR} $Z^{C,\cU}_h(\bt)=P^{C,\cU}_h(\bt)$ holds
for any $h\in H$ and  for any curve
$C$ embedded into a rational singularity $(X,o)$.
\end{corollary}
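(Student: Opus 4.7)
The plan is to chain together three previously established results: Theorem \ref{MTH}(\ref{rat}), Theorem \ref{TH:rel}, and Proposition \ref{PPP}. Since $(X,o)$ is rational, Theorem \ref{MTH}(\ref{rat}) tells us that the Main Identity $Z(\bt) = P(\bt)$ holds for \emph{every} good resolution $\pi$ of $(X,o)$, not just a particular one. This is the crucial input: the hypothesis of Theorem \ref{TH:rel} (which demands the Main Identity on all good resolutions of $(X,o)$) is satisfied.

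Given a curve germ $(C,o) \subset (X,o)$, choose a good embedded resolution $\pi$ satisfying the restriction (\ref{restr}). By Theorem \ref{TH:rel}, the Relative Identity $Z^C(\bt) = P^C(\bt)$ then holds for this pair. Now, to pass to the reduced setting with $\cU = \cU^C$, I would invoke Proposition \ref{PPP}, whose proof essentially reruns the descending-induction argument from the proof of Theorem \ref{MR} in the relative context: the coefficient $a^\cU_h$ stabilizes in the negative $E_{i_0}$-direction and vanishes in the positive direction, so that multiplying $L^\cU_h$ by $(t_{i_0}-1)$ and specializing $t_{i_0}=1$ recovers $L^{\bar{\cU}}_h$. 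This telescoping, applied to the relative Hilbert-type series, yields $P^{C,\cU}_h(\bt) = P^C_h(\bt)|_{t_j=1\ \mbox{\tiny{for all}}\ j\not\in\cU}$. The corresponding topological identity $Z^{C,\cU}_h(\bt) = Z^C_h(\bt)|_{t_j=1\ \mbox{\tiny{for all}}\ j\not\in\cU}$ is immediate from the definitions.

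Taking $h$-components of the Relative Identity (using the canonical decomposition from \ref{hcompa}, now for the modified valency function $\delta^C_j$) gives $Z^C_h(\bt) = P^C_h(\bt)$ for every $h \in H$. Specializing to $t_j = 1$ for $j \notin \cU^C$ on both sides, and combining with the two specialization identities just described, yields the desired equality $Z^{C,\cU}_h(\bt) = P^{C,\cU}_h(\bt)$ in $\setZ[[\bt^{pr_\cU(L')}]]$ for every $h \in H$.

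The main (and really the only) obstacle in this plan is verifying that the reduction step (\ref{MR}) goes through cleanly in the relative case --- i.e., that the coefficients $a^{C,\cU}_h(\bar{l} + k E_{i_0})$ still stabilize for $k \ll 0$ and vanish for $k \gg 0$. Stabilization for $k \ll 0$ follows from the fact that the filtration $\cF^C$ only imposes the intersection-multiplicity condition along the finitely many curves $\bar{C}_{j,\alpha}$ with $j \in \cU^C$; vanishing for $k \gg 0$ uses, as in the absolute case, the negative-definiteness of the intersection form. No new ingredient beyond what already appears in Sections \ref{sec:RI} and \ref{rel} is required.
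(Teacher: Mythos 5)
Your proof is correct and follows essentially the same route as the paper: Corollary \ref{RRR} is stated in the paper as an immediate consequence of Theorem \ref{MTH}(\ref{rat}) and Proposition \ref{PPP}, where Proposition \ref{PPP} itself is obtained (as the paper notes) by combining Theorem \ref{TH:rel} with a rerun of the reduction argument from Theorem \ref{MR} in the relative setting, which is exactly the chain you spell out.
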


Let us return back to the first sentence of (\ref{statement}), and
analyze it more explicitly for $h=0$. By the above discussion,
$$Z^{C,\cU}_0(\bt)=\frac{1}{|H|}\sum_{\rho}\widehat{Z}_\rho(\bt)$$
is a subtle topological invariant of the embedding $C\subset X$
(or $K_C\subset \Sigma$), while $P_0^{C,\cU}(\bt)$ depends only on
(analytic type) of the (abstract) curve $C$.

The validity of the identity $Z_0^{C,\cU}=P_0^{C,\cU}$ (say, for
some family of normal surface singularities $X$) can be translated
as follows: {\em the analytic type of an  abstract curve $C$ imposes
that if $C$ is embedded into some $X$ (from that family), the
Alexander invariant of the embedding is independent of the
embedding}.

E.g., if a smooth irreducible curve $C$ can be embedded into $X$
then $Z_0^{C,\cU}=P_0^{C,\cU}$ implies that the Alexander
invariant of the embedding should be $\sum_{k\geq 0}t^k=1/(1-t)$
(the Alexander invariant of the trivial knot in $S^3$, or the
Poicar\'e polynomial of $\setC[s]$, where $deg(s)=1$).

\begin{example}
Consider the situation from (\ref{A3}) (when $X$ is rational).
Then (written in the minimal resolution)
$$P(t)=\frac{1}{4}\sum_{\xi^4=1}\frac{1-\xi^2t}{(1-\xi t^{1/2})(1-\xi^3t^{1/2})}.$$
After a computation this, indeed, equals  $1/(1-t)$.
\end{example}

\begin{example}\label{EXMinEll}
The statement of (\ref{RRR})  is not true if we consider arbitrary
embeddings into minimally elliptic singularities, even if the
curve $C$ is smooth irreducible (unless $C$ is embedded in a very
special way).

Consider the minimally elliptic singularity $X=\{x^2+y^3+z^7=0\}$.
Its minimal good resolution graph is $\Gamma$ from (\ref{237})
whose notations we will use. Let $C$ be the irreducible curve
$\{z=x^2+y^3=0\}$. The minimal good resolution resolves the pair
$C\subset X$ as well, the strict transform cuts $E_1$. Hence,
$Z(t)$, which in this case equals also the monodromy zeta function
of $z:X\to \setC$ is $(t^6-1)/(t^3-1)(t^2-1)=(t^2-t+1)/(1-t)$,
hence equals the monodromy zeta function of the plane curve
singularity $x^2+y^3$ (i.e., of $C$ embedded differently), and
equals also the Poincar\'e series $P(t)$ (which in this case is
the same as the generating function of the semigroup
$\{0,2,3,4\cdots \}\subset \setN $ of $C$). Hence
$Z^{C,\cU}=P^{C,\cU}$.

Consider now another irreducible curve $C'$ in $X$ which has the
same embedded topological type as $C$, i.e. its strict transform cuts
transversally $E_1$. E.g., one of the possibilities is to take
$C':=\{y=z^2, x=iz^3\sqrt{1+z}\}$. (Notice that the
parametrization $z\mapsto (iz^3\sqrt{1+z}, z^2,z)$ has leading
terms of weights $(3,2,1)$, which fits with the weights considered
for $E_1$ in (\ref{237}).) Clearly, $Z^{C',\cU}$ is as before,
hence equals $(t^2-t+1)/(1-t)$. But, since $C'$ is smooth,
$P^{C',\cU}(t)=1/(1-t)$. Hence $Z^{C',\cU}\not=P^{C',\cU}$. This
also shows that the non-reduced, relative identity also fails in
general (cf. \ref{ME}).

This construction is not special only for this case. (E.g., by a
very small modification, one may repeat the same argument for
(\ref{me2}) too.) Definitely, the main difference between $C$ and
$C'$ is that one of them contains the basepoint $Q$ of
$\pi^*(m_{X,o})$ the other one not. (Compare with the discussion
from (\ref{me2}).) Hence, possibilities to construct such pairs
appear in abundance.
\end{example}

\bibliographystyle{amsplain}\bibliography{hivatkozas}

\end{document}